\documentclass[10pt]{article}
\usepackage[margin=1in]{geometry}
\usepackage{graphicx}
\usepackage{color}
\usepackage{amsmath}
\usepackage{amsthm}

\usepackage{array}
\usepackage{ascmac}
\usepackage{amssymb}
\usepackage{booktabs}
\usepackage{wrapfig}
\usepackage[algoruled,linesnumbered,algo2e,vlined]{algorithm2e}
\usepackage{algorithmicx}
\usepackage{algpseudocode}
\usepackage{url}
\usepackage{authblk}
\usepackage[nocompress]{cite}
\usepackage{multirow}
\usepackage{multicol}
\usepackage{cases}
\usepackage[normalem]{ulem}
\usepackage{boites}

\newcommand{\calF}{{\mathcal{F}}}
\newcommand{\calS}{{\mathcal{S}}}

\usepackage[pagewise,mathlines]{lineno}

\usepackage[draft,mode=multiuser]{fixme}
\usepackage{colortbl}

\fxsetup{theme=color}
\definecolor{fxtarget}{rgb}{0.0000,0.0000,0.4823}

\fxuseenvlayout{colorsig}
\fxusetargetlayout{color}

\fxuseenvlayout{colorsig}
\fxusetargetlayout{color}
\FXRegisterAuthor{hs}{ahs}{HS}
\FXRegisterAuthor{tm}{atm}{TM}
\FXRegisterAuthor{dk}{adk}{DK}
\FXRegisterAuthor{si}{asi}{SI}
\fxsetface{env}{}

\numberwithin{equation}{section}
\theoremstyle{plain}
\newtheorem{theorem}{Theorem}[section]
\newtheorem{lemma}[theorem]{Lemma}
\newtheorem{fact}[theorem]{Fact}

\theoremstyle{definition}

\newenvironment{keywords}
  {\par\smallskip\noindent\textbf{Keywords.}\ }
  {\par\smallskip}
\providecommand{\ackname}{Acknowledgements}
\begin{document}

\title{Relaxation of Square-Freeness}

\author[1]{Hiroki~Shibata}
\author[2]{Takuya~Mieno}
\author[3]{Dominik~K\"oppl}
\author[4]{Shunsuke~Inenaga}

\affil[1]{Joint Graduate School of Mathematics for Innovation, Kyushu University\\
\protect\url{shibata.hiroki.753@s.kyushu-u.ac.jp}}
\affil[2]{Graduate School of Informatics and Engineering, University of Electro-Communications\\
\protect\url{tmieno@uec.ac.jp}}
\affil[3]{Department of Computer Science and Engineering, University of Yamanashi\\
\protect\url{dkppl@yamanashi.ac.jp}}
\affil[4]{Department of Informatics, Kyushu University\\
\protect\url{inenaga.shunsuke.380@m.kyushu-u.ac.jp}}

\date{}

\maketitle

\begin{abstract}
We extend the analysis of nonrepetitive sequences of Entringer et al. [Journal of Combinatorial Theory, 1974] to relaxations of equality testing under nonstandard equivalence relations, in particular parameterized equivalence and order-preserving equivalence.
For this setting, we introduce $\ell^+$-squares, defined as squares whose total length is at least $2\ell$.
We obtain an infinite $3^+$-parameterized-square-free ternary word and an infinite $3^+$-order-preserving-square-free binary word through an approach based on combinatorics on words.
In addition, we report the longest $\ell^+$-square-free words across several equivalence relations.
\end{abstract}

\begin{keywords}
squarefree words, parameterized equivalence, order-preserving equivalence, morphic word
\end{keywords}

\section{Introduction}
A \emph{square} is a non-empty word of the form $uu$.
Squares are one of the simplest and most studied repetitions in words~\cite{crochemore95squares,fraenkel98square}.
A word is \emph{square-free} if it contains no square as a subword.
More than a century ago, Thue~\cite{thue1906} constructed an infinite square-free word over a ternary alphabet.
A ternary alphabet is the smallest possible alphabet for an infinite square-free word.
Indeed, every infinite binary sequence contains at least one of the squares $aa$, $bb$, and $abab$.

The notion of a square can be generalized by replacing equality between the two halves with a weaker equivalence relation on words.
In this paper, we use two equivalence relations from pattern matching:
\emph{parameterized equivalence}~\cite{baker96parametrized} and \emph{order-preserving equivalence}~\cite{kim14orderpreserving}.
We say that two words $u$ and $v$ of the same length~$\ell$ are \emph{parameterized equivalent} if there is a bijection $f$ with $f(u[i])=v[i]$ for each text position $i$.
We say that they are \emph{order-preserving equivalent} if $u[i]\preceq u[j]\Leftrightarrow v[i]\preceq v[j]$ holds for every pair of text positions $i$ and $j$.
Order-preserving equivalence can also be viewed through the \emph{increasing bijection} on the alphabet.
Equivalently,
$u$ and $v$ are parameterized equivalent under an increasing bijection.
A non-empty word $uv$ is called a \emph{parameterized} (resp.\ \emph{order-preserving}) \emph{square} if $u$ and $v$ are parameterized (resp.\ order-preserving) equivalent~\cite{kociumaka16maximum}.

Some equivalence relations admit no infinite square-free words.
For example, every word of length two is a parameterized square, so no parameterized-square-free infinite word can exist.
Even if we consider only \emph{non-trivial} squares (those of length greater than two), no infinite word avoids every non-trivial parameterized square~\cite{kociumaka16maximum}.
Consequently, weaker notions of square-freeness are required to give positive results on infinite words that avoid a weaker type of square.
We study \emph{$\ell^+$-squares}, squares of length at least $2\ell$, and investigate \emph{$\ell^+$-square-free words} under parameterized equivalence.

A complete characterization of the avoidability of length-restricted parameterized and order-preserving squares can be derived by combining existing results~\cite{kociumaka16maximum,NgORS19}.
However, the proofs by Ng et al.~\cite{NgORS19} do not rely solely on combinatorics on words because their approach uses large morphisms and automated theorem proving tools.
In this paper, we construct square-free words using an approach based on combinatorics on words.

Our main results are summarized as follows.
We first prove the existence of an infinite $3^+$-parameterized-square-free ternary word.
We then show the existence of an infinite $3^+$-order-preserving-square-free binary word.
Both infinite words are constructed using small morphisms, and their square-freeness is proved through an approach based on combinatorics on words.
Finally, through computational experiments, we establish the maximum lengths of words that avoid length-restricted squares under several nonstandard equivalence relations and give explicit longest words.

Table~\ref{tab:longest_squarefree} summarizes the maximum lengths of $\ell^+$-square-free words under several equivalence relations.

\begin{table}[t]
    \centering
    \caption{
    Lengths of the longest $\ell^+$-square-free words under various equivalence relations and alphabet sizes.
    An entry marked $\infty$ indicates the existence of an infinite $\ell^+$-square-free word.
    A question mark ($?$) denotes a case that remains open.
    A dash ($-$) indicates that infiniteness follows from another case (e.g., a smaller $\ell$ or $\sigma$).
    }
    \vspace{1em}

    \begin{minipage}[t]{0.48\textwidth}
        \centering
        \textbf{Strict Equality}\\[0.5em]
        \begin{tabular}{c|cccc}
         $\sigma \backslash \ell$ & 1 & 2 & 3 & 4 \\
         \hline
         2 & 3 & 18 & $\infty$ & $-$ \\
         3 & $\infty$ & $-$ & $-$ & $-$ \\
         4 & $-$ & $-$ & $-$ & $-$ \\
         5 & $-$ & $-$ & $-$ & $-$ \\
        \end{tabular}
    \end{minipage}
    \hfill
    \begin{minipage}[t]{0.48\textwidth}
        \centering
        \textbf{Parameterized Equivalence}\\[0.5em]
        \begin{tabular}{c|cccc}
         $\sigma \backslash \ell$ & 1 & 2 & 3 & 4 \\
         \hline
         2 & 1 & 7 & $\infty$ & $-$ \\
         3 & 1 & 9 & $-$ & $-$ \\
         4 & 1 & 9 & $-$ & $-$ \\
         5 & 1 & 9 & $-$ & $-$ \\
        \end{tabular}
    \end{minipage}

    \vspace{1em}

    \begin{minipage}[t]{0.48\textwidth}
        \centering
        \textbf{Order-Preserving Equivalence}\\[0.5em]
        \begin{tabular}{c|cccc}
         $\sigma \backslash \ell$ & 1 & 2 & 3 & 4 \\
         \hline
         2 & 1 & 7 & $\infty$ & $-$  \\
         3 & 1 & $\infty$ & $-$ & $-$ \\
         4 & 1 & $-$ & $-$ & $-$ \\
         5 & 1 & $-$ & $-$ & $-$ \\
        \end{tabular}
    \end{minipage}
    \hfill
    \begin{minipage}[t]{0.48\textwidth}
        \centering
        \textbf{Cartesian-Tree Equivalence~\cite{park19cartesian}}\\[0.5em]
        \begin{tabular}{c|cccc}
         $\sigma \backslash \ell$ & 1 & 2 & 3 & 4 \\
         \hline
         2 & 1 & 5 & 29 & $?$ \\
         3 & 1 & 9 & $?$ & $?$ \\
         4 & 1 & 9 & $?$ & $?$ \\
         5 & 1 & 9 & $?$ & $?$ \\
        \end{tabular}
    \end{minipage}
    \label{tab:longest_squarefree}
\end{table}

\paragraph{Related Work.}
Square-free words were first studied by Thue~\cite{thue1906} over a century ago and have since remained an active topic of research.
Variants involving length-restricted squares have also been explored.
For example, it is known that every binary word of length greater than eighteen contains a $2^+$-square~\cite{entringer74nonrepetitive}.
In contrast, infinite binary $3^+$-square-free words exist~\cite{entringer74nonrepetitive,rampersad05avoiding}; they are constructed by applying a morphism to a ternary square-free word.
This kind of freeness has also been studied for Gray codes~\cite{prodinger83nonrepetitive} and circular words~\cite{currie21characterization}.

Squares and other repetitions under generalized equivalence relations on words have also been studied.
In the abelian model, where equivalence is defined by the multiset of characters, Keränen~\cite{keranen92abelian} showed the existence of infinite abelian-square-free words over a four-character alphabet, but it is known that none exist over a ternary alphabet.
In the parameterized setting, the number of parameterized squares in a word has also been studied~\cite{kociumaka16maximum,hamai24parameterized}.
Cubic and unary patterns under permutations, which are closely related to parameterized matching, have also been studied~\cite{manea15cubic,currie18unary}.

Words avoiding parameterized and order-preserving squares have also been studied in several works.
Kociumaka et al.~\cite{kociumaka16maximum} showed that an infinite $2^+$-order-preserving-square-free word exists over a ternary alphabet, whereas no such word exists in the parameterized model even without restrictions on the alphabet size.
Ng et al.~\cite{NgORS19} constructed an infinite binary word that avoids every ordinary square and \emph{antisquare} of length at least $6$, where an antisquare is a binary word of the form $x\overline{x}$ and $\overline{x}$ is obtained by changing every character of $x$.
Since parameterized squares over a binary alphabet are precisely ordinary squares and antisquares, this infinite binary word is $3^+$-parameterized-square-free.
Moreover, every order-preserving square is a parameterized square, and hence the word is $3^+$-order-preserving-square-free.
Their results were proved using large morphisms found through computer search and \texttt{Walnut}~\cite{Walnut}.
Combining these results gives a complete characterization of the values of $\ell$ and the alphabet sizes for which infinite $\ell^+$-square-free words exist in the parameterized and order-preserving models.

\section{Preliminaries}
Let $\Sigma$ be an \emph{alphabet}.
We denote the size of $\Sigma$ by $\sigma$.
An element of $\Sigma$ is referred to as a \emph{character}, and an element of $\Sigma^*$ is referred to as a \emph{word}.
We denote a word $x$ of length $n$ as $x = x[0] \cdots x[n-1]$, where $x[i]$ denotes the $i$-th character of $x$.
For $0 \leq i < j \leq |x|$, we denote by $x[i..j)$ the subword of $x$ consisting of the characters $x[i] \cdots x[j-1]$,
where the notation $[i..j)$ follows the half-open interval convention.
If a word $x$ does not contain another word $w$ as a subword, we say that $x$ \emph{avoids} $w$.

Two words $x, y \in \Sigma^n$ are \emph{parameterized equivalent} if there exists a bijective function $f: \Sigma \rightarrow \Sigma$ such that $f(x[i]) = y[i]$ for all $0 \leq i < n$.
Two words $x, y \in \Sigma^n$ on a totally ordered alphabet $\Sigma$ are \emph{order-preserving equivalent} if $x[i]\preceq x[j]\Leftrightarrow y[i]\preceq y[j]$ holds for every pair $i,j \in [0..n-1]$.
We note that order-preserving equivalence is stronger than parameterized equivalence;
in particular, if two words $x$ and $y$ are order-preserving equivalent, then they are also parameterized equivalent.

A word of the form $ww$ is called a (standard) \emph{square}.
A word that contains no standard square as a subword is called \emph{square-free}.
For an integer $\ell$, we call such a square an $\ell^+$\emph{-square} if its total length is at least $2\ell$.
A word of the form $uv$, where $u$ and $v$ are \emph{parameterized} (resp. \emph{order-preserving}) \emph{equivalent}, is called a \emph{parameterized} (resp. \emph{order-preserving}) \emph{square}.
We use the term \emph{$\ell^+$-parameterized} (resp. \emph{$\ell^+$-order-preserving}) \emph{square} to refer to such a square when its length is at least $2\ell$.
A word is said to be \emph{$\ell^+$-square-free}, with respect to an equivalence model (exact, parameterized, order-preserving, etc.), if it contains no square of the corresponding type with length at least $2\ell$.

A \emph{morphism} on two alphabets $\Sigma$ and $\Sigma'$ is a function {$h: \Sigma^* \rightarrow \Sigma'^*$} such that $h(xy) = h(x)h(y)$ for all words $x, y \in \Sigma^*$.
Hence, to define a morphism $h$, it suffices to specify $h(c)$ for each character $c \in \Sigma$.
When $\Sigma' = \Sigma$ (i.e., $h: \Sigma^* \to \Sigma^*$), we define $h^k(x)$ for $x \in \Sigma^*$ as the word obtained by applying $h$ to $x$ repeatedly $k$ times.
If $x$ is a prefix of $h(x)$, then $h^k(x)$ is a prefix of $h^{k+1}(x)$ for every $k \geq 0$.
If the lengths $|h^k(x)|$ are unbounded, these finite words define an infinite word.
Let $\Sigma_2 = \{0, 1\}$ and $\Sigma_3 = \{\rm a, b, c\}$ denote the binary and ternary alphabets, respectively.
In this paper, we put special emphasis on the morphism $\phi: \Sigma_3^* \rightarrow \Sigma_3^*$ defined by

\begin{align*}
    {\rm a} &\mapsto {\rm abc}, \\
    {\rm b} &\mapsto {\rm ac}, \\
    {\rm c} &\mapsto {\rm b}.
\end{align*}
It is known that $\phi^k({\rm a})$ is square-free for any $k \geq 0$~\cite{cori90abelian}.
For instance,
$\phi({\rm a}) = {\rm abc}$,
$\phi^2({\rm a}) = {\rm abcacb}$, and
$\phi^3({\rm a}) = {\rm abcacbabcbac}$,
none of which contains a square.

\section{$3^+$-Parameterized-Square-Free Ternary Words}
In this section, we aim to construct an infinite $3^+$-parameterized-square-free ternary word.
We define $C = {\rm cbbbc}$ and let $\mu: \Sigma_3^* \rightarrow \Sigma_3^*$ be the morphism that maps $\rm c$ to $C$ while leaving $\rm a$ and $\rm b$ unchanged.
For each $k \ge 0$, we define $x'_k = \phi^k({\rm a})$ and $x_k = \mu(x'_k)$.
Recall that $x'_k$ is square-free~\cite{cori90abelian}.

We first show that $x_k$ contains no $3^+$-parameterized squares except for the standard squares.
\begin{lemma} \label{lem:no_parameterized}
For any $k \ge 0$, if a $3^+$-parameterized square $w = uv$ $(|u|=|v| \geq 3)$ exists as a subword of $x_k$, then $u = v$.
\end{lemma}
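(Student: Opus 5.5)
The plan is to show that the bijection $f$ witnessing $u \sim v$ must be the identity on every letter occurring in $u$, which immediately gives $u=v$. Two structural facts about $x_k=\mu(x'_k)$ drive this, both read off from the square-freeness of $x'_k$ and the definition of $\phi$. First, since $x'_k$ is square-free it contains no $\rm aa$, $\rm bb$ or $\rm cc$. Second, after applying $\mu$, the only repeated adjacent letters in $x_k$ are the $\rm bb$'s inside the blocks $C={\rm cbbbc}$. Moreover, every maximal run of $\rm b$'s in $x_k$ has length $1$ or $3$, and a run of length $3$ is always flanked by $\rm c$ on both sides. From these facts: if $u$ contains a factor $\rm bb$, then $v$ contains $f({\rm b})f({\rm b})$, so $f({\rm b})={\rm b}$. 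If $u$ moreover contains some $\rm c$ adjacent to a $\rm bbb$, then the corresponding position in $v$ is adjacent to a $\rm bbb$ run, forcing $f({\rm c})={\rm c}$, and then $f({\rm a})={\rm a}$ because $f$ is a bijection.

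The argument therefore splits by the length of $u$. Fix a threshold $L$ such that every factor of $x_k$ of length $L$ contains a whole block ${\rm cbbbc}$. Such an $L$ exists because $x'_k$ has no long $\rm c$-free factor: a $\rm c$-free factor of $x'_k$ is a square-free word over $\{\rm a,b\}$, hence has length at most $3$. This gives $L$ of roughly a dozen, and I would compute the exact value. For $|u|\ge L$, the previous paragraph applies directly: $u$ contains $\rm cbbbc$, so $f$ fixes $\rm b$ and $\rm c$, hence also $\rm a$, and $u=v$.

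For $3\le |u| < L$ the argument is a finite verification. Every factor of $x_k$ of length less than $2L$ is $\mu$ applied to a factor of $x'_k$ of bounded length, together with a prefix and suffix of a $C$ block. The set of factors of $x'_k$ of a bounded length stabilizes for small $k$, since $\phi$ is primitive and $x'_k$ is a prefix of $x'_{k+1}$. One therefore enumerates all factors of $x_k$ of length $2m$ for $3\le m<L$, say from $x_{k_0}$ for a small fixed $k_0$, and checks that each parameterized square among them is an ordinary square. Naive local rigidity fails here: all six permutations of $\rm abc$ occur as length-$3$ factors. What must be exploited is the adjacency of $u$ and $v$. For example, $u={\rm abc}$ and $v={\rm cab}$ would give the factor $\rm abccab$, which contains $\rm cc$ and is excluded.

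I expect this short-length range to be the main obstacle. I would first try to replace the brute-force check by a case analysis on whether $u$ contains $\rm bb$. If it does, $f({\rm b})={\rm b}$, and one argues about the neighbours of the run. If it does not, $u$ lies within a stretch between consecutive $\rm bbb$ runs, and such stretches come from the short patterns ${\rm c}\,w\,{\rm c}$ with $w$ a $\rm c$-free factor of $x'_k$. There are few such patterns, so they can be listed by hand. Failing a clean hand argument, I would fall back on a small explicit computer enumeration.
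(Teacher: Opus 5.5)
You prove the case $|u|\ge 12$ correctly, but that is the easy part. The content of the lemma lies in the range $3\le |u|\le 11$, and there you only promise one of two things. The first is a computer enumeration that you do not carry out. Its finiteness also rests on the unquantified claim that factor sets ``stabilize for small $k$''; making that rigorous needs an explicit $k_0$, e.g.\ from the fact that once $|\phi^j({\rm c})|\ge n$, every length-$n$ factor lies in $\phi^j(de)$ for some two-letter factor $de$. The second is a hand case analysis that you only sketch. As written, the proposal does not prove the statement.

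Moreover, the hand analysis cannot succeed with the facts you list, all of which follow from square-freeness of $x'_k$ alone. For a general square-free $x'$ the conclusion is false. For instance, $x'={\rm cabac}$ is square-free, yet $\mu(x')$ contains ${\rm bcabac}=({\rm bca})({\rm bac})$. This is a parameterized square via the bijection fixing $\rm b$ and swapping $\rm a$ and $\rm c$.

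The missing ingredient is a property specific to $\phi$: $\rm aba$ never occurs in $\phi^k({\rm a})$. A $\rm b$ in $\phi(z)$ is either the middle letter of $\phi({\rm a})={\rm abc}$, hence followed by $\rm c$, or it is $\phi({\rm c})$. In the latter case it is preceded, if at all, by the last letter of some $\phi(d)$, which is never $\rm a$. Consequently the $\rm c$-free factors of $x'_k$ lie in $\{{\rm a},{\rm b},{\rm ab},{\rm ba},{\rm bab}\}$.

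With this in hand, the paper avoids any length split by applying your rigidity idea to the whole square $w$ rather than to $u$. A $\rm bb$-free factor of $x_k$ of length at least $6$ has the form $\alpha s\beta$. Here $\alpha$ is a suffix of $C$ of length at most $2$, $\beta$ is a prefix of $C$ of length at most $2$, and $s$ lies in the set above. The only candidates of even length at least $6$ are ${\rm bcbabc}$, ${\rm cbabcb}$, ${\rm bcabcb}$, and ${\rm bcbacb}$, none of which is a parameterized square. Hence $w$ contains $\rm bb$, which sits inside some $C={\rm cbbbc}$. Since $|w|\ge 6$, one half contains $\rm cbb$ or $\rm bbc$. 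These are the only length-three factors of $x_k$ with patterns $XYY$ and $XXY$ respectively, so $f({\rm b})={\rm b}$, $f({\rm c})={\rm c}$, and $f$ is the identity. This is what your ``$u$ contains $\rm bb$ or not'' sketch reduces to once the $\rm aba$ exclusion is in place. It makes both the threshold $L$ and the enumeration unnecessary.
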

\begin{proof}
Let $w = uv$ be a $3^+$-parameterized square subword of $x_k$ of length $2\ell$ ($\ell \geq 3$), where $u$ and $v$ are parameterized equivalent under some bijection $f$.

We first determine the nonempty subwords of $x'_k$ that avoid $\rm c$.
Since every subword of the square-free word $x'_k$ is square-free, such a subword cannot contain any of the binary squares $\rm aa$, $\rm bb$, $\rm abab$, and $\rm baba$.
Indeed, avoiding $\rm aa$ and $\rm bb$ forces a binary word to alternate, and every alternating word of length at least four contains $\rm abab$ or $\rm baba$.
Thus, any nonempty subword of $x'_k$ that avoids $\rm c$ belongs to $\{{\rm a}, {\rm b}, {\rm ab}, {\rm ba}, {\rm aba}, {\rm bab}\}$.
We next exclude $\rm aba$.
This is clear for $x'_0={\rm a}$.
For $k \geq 1$, write $x'_k=\phi(z)$ with $z=x'_{k-1}$.
In any occurrence of $\rm aba$, the middle $\rm b$ is preceded and followed by $\rm a$.
Consider an occurrence of $\rm b$ in $\phi(z)$.
It is either the middle character of $\phi({\rm a})={\rm abc}$ or the whole image $\phi({\rm c})={\rm b}$, since $\phi({\rm b})={\rm ac}$ contains no $\rm b$.
In the first case, this $\rm b$ is followed by $\rm c$, and hence it cannot be the middle character of $\rm aba$.
In the second case, if this $\rm c$ is the first character of $z$, then this $\rm b$ has no preceding character and cannot be the middle character of $\rm aba$.
Otherwise, the character preceding this $\rm b$ is the last character of $\phi(d)$ for the character $d$ preceding this $\rm c$ in $z$.
Since $\phi({\rm a})={\rm abc}$, $\phi({\rm b})={\rm ac}$, and $\phi({\rm c})={\rm b}$ end with $\rm c$, $\rm c$, and $\rm b$, respectively, the character preceding this $\rm b$ is not $\rm a$.
Hence $\rm aba$ does not occur in $x'_k$.
Consequently, every nonempty subword of $x'_k$ that avoids $\rm c$ belongs to
$S = \{{\rm a}, {\rm b}, {\rm ab}, {\rm ba}, {\rm bab}\}$.

We next show that $\rm bb$ occurs in $w$.
Since $x_k$ is obtained from $x'_k$ by replacing each occurrence of ${\rm c}$ with $C$,
any subword of $x_k$ that avoids $\rm bb$ cannot contain an occurrence of $C$, because $C = \rm cbbbc$ contains $\rm bb$.
Moreover, no suffix or prefix of $C$ of length at least $3$ avoids $\rm bb$.
Suppose, for the sake of contradiction, that $w$ avoids $\rm bb$.
Then, it must be of the form $w = \alpha s \beta$, where $\alpha$ (resp. $\beta$) is a suffix (resp. prefix) of $C$ of length at most $2$ and $s \in S$.
This gives $|s| = 2\ell - |\alpha| - |\beta| \ge 2$, so
the only possibilities for $s$ are $\rm ab$, $\rm ba$, and $\rm bab$.
Consequently, the only possible candidates for $w$, whose length is even and at least six,
are $\rm bcbabc$, $\rm cbabcb$, $\rm bcabcb$, and $\rm bcbacb$.
None of them is a parameterized square.
Hence, $\rm bb$ occurs in $w$.

Since $|w| \ge 6$ and every occurrence of $\rm bb$ in $x_k$ must be a part of an image of $\rm c$ (i.e., $C=\rm cbbbc$),
either $u$ or $v$ must contain the subword $\rm cbb$ or $\rm bbc$.
Since $x'_k$ is square-free, it contains no two equal consecutive characters.
The morphism $\mu$ only replaces $\rm c$ with $C=\rm cbbbc$.
Thus, every occurrence of two equal consecutive characters in $x_k$ is contained in the block $\rm bbb$ of an occurrence of $C$.
Hence, if a length-three subword of $x_k$ has its last two (resp. first two) characters equal and the remaining character different, then it must be $\rm cbb$ (resp. $\rm bbc$).
Thus, the only subword of $x_k$ parameterized equivalent to $\rm cbb$ (resp. $\rm bbc$) is $\rm cbb$ itself (resp. $\rm bbc$ itself).
Therefore, the corresponding subword in the other half of $w$ is identical to $\rm cbb$ or $\rm bbc$, and hence $f({\rm b}) = {\rm b}$ and $f({\rm c}) = {\rm c}$.
Consequently, $f$ must be the identity function, and $w$ is a standard square $w = uu$.
\end{proof}

\begin{figure}[t]
    \centering
    \includegraphics[width=0.8\linewidth]{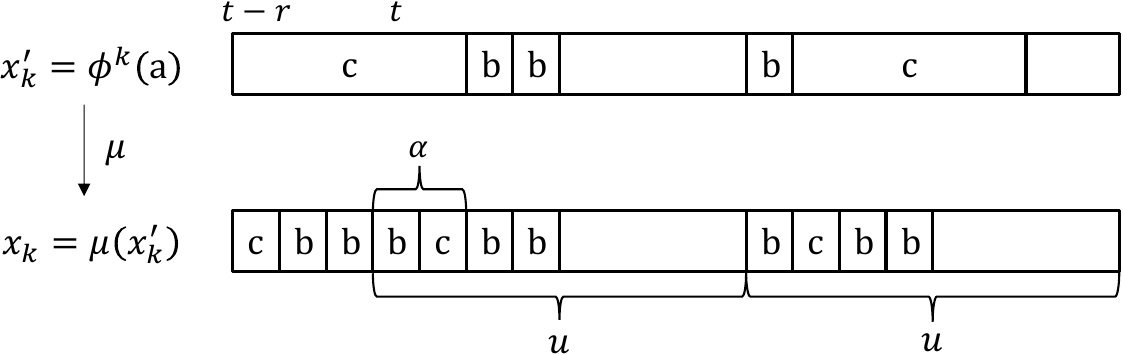}
    \caption{
    Illustration for Lemma~\ref{lem:standard_square_free}.
    In this figure, the two occurrences of $u$ in $w = u^2$ have different offsets.
    The left one has offset $r = 3$, and the right one starts at a boundary, that is, with offset $0$.
    In this case, $u$ starts with $\alpha {\rm bb} = {\rm bcbb}$, where $\alpha = {\rm bc}$.
    The factorization starting at position $t - r$ is $C|{\rm b}|{\rm b}$.
    This implies that $x'_k$ contains the square $\rm bb$.
    }
    \label{fig:offset_diff}
\end{figure}

We also show the $3^+$-square-freeness of $x_k$.
\begin{lemma} \label{lem:standard_square_free}
    For any $k \geq 0$,
    the word $x_k$ is $3^+$-square-free.
\end{lemma}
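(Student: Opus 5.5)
Suppose, for contradiction, that $x_k$ contains a standard square $w=uu$ with $|u|\ge 3$. The plan is to transfer this square back to a square in $x'_k$, contradicting its square-freeness. We view $x_k$ as a concatenation of blocks, one per character of $x'_k$: a single ${\rm a}$, a single ${\rm b}$, or a copy of $C={\rm cbbbc}$. For each occurrence of $u$, its \emph{offset} $r$ is the distance from its start back to the start of the block containing its first character; the offset is $0$ when $u$ starts at a block boundary.

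\textbf{Step 1: $u$ contains ${\rm bb}$.} From the analysis in the proof of Lemma~\ref{lem:no_parameterized}, every subword of $x_k$ avoiding ${\rm bb}$ has the form $\alpha s\beta$, where $\alpha$ and $\beta$ are a suffix and a prefix of $C$ of length at most $2$ and $s\in S$. If $|u|\ge 6$, then $w$ itself contains ${\rm bb}$, so one copy of $u$, and hence both, contain ${\rm bb}$. Otherwise $3\le|u|\le 5$, and the resulting finite set of candidate squares can be checked directly. In particular $u$ contains an occurrence of ${\rm bb}$.

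\textbf{Step 2: both copies have the same offset.} Occurrences of ${\rm bb}$ in $x_k$ lie only inside the ${\rm bbb}$ of a copy of $C$. There is one subtlety: ${\rm bbb}$ contains ${\rm bb}$ at two positions. To handle it, we use the leftmost occurrence of ${\rm bb}$ in $u$ together with its left context, namely whether ${\rm bb}$ is preceded by ${\rm c}$ inside $u$ or sits at the very start of $u$. We compare the two copies of $u$ as in Figure~\ref{fig:offset_diff}. If the offsets differ, the prefix of $u$ up to and including this ${\rm bb}$ is forced, in one copy, to be a suffix of $C$ followed by a block boundary. In the other copy the same characters must then be parsed with a block boundary in a different place. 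Because blocks other than $C$ are the single letters ${\rm a}$ and ${\rm b}$, this different parsing forces factors such as $C|{\rm b}|{\rm b}$. That is, $x'_k$ would contain ${\rm cbb}$-type configurations that yield the square ${\rm bb}$ in $x'_k$, or the misaligned parsing forces two adjacent ${\rm c}$'s and hence the square ${\rm cc}$. Either way we contradict the square-freeness of $x'_k$. This case analysis over the possible offsets $r\in\{0,1,2,3,4\}$ within $C$ is the main obstacle. I would organize it by the position of the first ${\rm bb}$ in $u$ relative to its first ${\rm c}$, and check each offset pair briefly.

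\textbf{Step 3: conclude.} With equal offsets $r$, write the first copy of $u$ as $\gamma\,\mu(y)\,\delta$ and the second as $\gamma\,\mu(y')\,\delta'$, where $\gamma$ is a proper suffix of a block. We cut at the same place in both copies and use that $\mu$ is injective and uniquely decodable: ${\rm c}$ occurs only as the first or last letter of $C$, so the decoding is unambiguous. The two halves then correspond to the same factor of $x'_k$, shifted by one period. More precisely, the blocks starting at the two copies' starting positions, together with everything between them, form a factor $zz$ of $x'_k$ with $z$ nonempty, since $u$ contains at least the ${\rm bb}$ of a $C$. When $r>0$, the partial block at the start of the first copy must be the same block type as the partial block at the start of the second copy, because the characters agree and the offsets agree. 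This still gives a square in $x'_k$. This contradicts the fact that $x'_k$ is square-free~\cite{cori90abelian}, so $x_k$ is $3^+$-square-free.
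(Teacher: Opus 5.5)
Your overall plan matches the paper's: force the two copies of $u$ to start at the same offset inside $C$, then lift the square to $x'_k$. Step~3 is fine and is the paper's final step in another form. Shifting both windows left by $r$ gives a block-aligned square $\mu(zz)$; for $r>0$ you have $z={\rm c}v$, and $\mu(zz)$ is a prefix of the paper's $C\mu(v)C\mu(v)C$. You should still say explicitly why $p+\ell-r$ is a boundary: the window starting at the boundary $p-r$ equals $\mu(z)$ as a string, so unique forward parsing applies.

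\textbf{The gap is Step~2, which is the heart of the lemma.} You never prove that the two offsets agree. You call it ``the main obstacle'' and defer it to a case check you do not carry out. The mechanism you sketch also does not fit your own setup: under a leftmost-${\rm bb}$ organization there is no second parsing to refute, and the ``two adjacent ${\rm c}$'s'' alternative never arises.

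The missing observation is short. Every ${\rm bb}$ in $x_k$ lies in the ${\rm bbb}$ of some $C$, so it is preceded by ${\rm b}$ or by $C[0]={\rm c}$. Hence the leftmost ${\rm bb}$ of $u$ falls into one of two cases.
\begin{itemize}
\item It begins $u$. Then $u$ begins with ${\rm bbb}$ or ${\rm bbc}$, which occur only at offsets $1$ and $2$ respectively.
\item It is preceded inside $u$ by a ${\rm c}$ that starts a $C$. That ${\rm c}$ sits at a boundary, at the same position of $u$ in both copies. Since $\{{\rm a},{\rm b},C\}$ is a suffix code and the factor ending at a boundary is determined by its last letter, parsing the common prefix of $u$ backwards from that boundary gives the same starting offset in both copies.
\end{itemize}

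The paper reaches the same conclusion without needing any ${\rm bb}$ in $u$. If one copy has offset $r\in\{1,2\}$, the prefix ${\rm bbb}$ or ${\rm bbc}$ pins the offset. If $r\in\{3,4\}$, the prefix $C[r..5)$ pins every non-boundary offset, so a mismatched copy must start at a boundary with factors ${\rm b}\,C$ or $C$. Then $u$ begins with $C[r..5)\,{\rm bb}$, and reading this in the offset-$r$ copy gives the factorization $C|{\rm b}|{\rm b}$, i.e.\ ${\rm bb}$ in $x'_k$.

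Step~1, which you need only because of your Step~2, has a slip. That $w$ contains ${\rm bb}$ does not imply that a copy of $u$ does, since the ${\rm bb}$ could straddle the midpoint of $w$. You can exclude this directly. A ${\rm bb}$-free $u=\alpha s\beta$ with $|u|\ge 6$ that begins and ends with ${\rm b}$ must have $\alpha={\rm bc}$ and $\beta={\rm cb}$. The middle of $w$ would then read ${\rm cbbc}$, which never occurs in $x_k$. Alternatively, note that ${\rm bb}$-free factors of $x_k$ have length at most $7$ and extend your finite check to $|u|\le 7$. That finite check is itself left undone. The paper's argument needs neither the check nor the ${\rm bb}$ step; it works uniformly from the prefix of $u$.
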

\begin{proof}
Let $\calS = \{{\rm a}, {\rm b}, C\}$.
In this proof, we define a \emph{factorization} of a word $z$ as a decomposition $z = f_1 \cdots f_m$ with $f_i \in \calS$ for every $i$.
The words $f_i$ are called \emph{factors}.
Since $\calS$ is prefix-free, such a factorization is unique whenever it exists.
Moreover, since $\mu$ maps $\rm c$ to $C$ while leaving $\rm a$ and $\rm b$ unchanged,
the word $x_k = \mu(x'_k)$ has a factorization, and hence it has a unique factorization.
We call a position a \emph{boundary} if a factor starts there.
If a position is not a boundary, then it lies inside an occurrence of $C$.
In this case, we denote its offset from the beginning of that occurrence to be an element of $\{ 1, 2, 3, 4\}$.
Boundary positions are said to have offset $0$.
If both endpoints of a subword $z$ of $x_k$ are boundaries, then its factorization determines a unique word $v$ such that $\mu(v) = z$.

For the sake of contradiction, assume that there exists a square subword $w = uu = x_k[p-\ell..p+\ell)$ of $x_k$ with $\ell \geq 3$ centered at text position $p$.
Let $i = p - \ell$ be the starting position of the square.

We first show that the two occurrences $x_k[i..p)$ and $x_k[p..p+\ell)$ of $u$ start at the same offset.
Assume that one occurrence starts at offset $r \in \{1, 2\}$.
Then it begins with ${\rm bbb}$ if $r = 1$ and with ${\rm bbc}$ if $r = 2$.
These prefixes occur only inside an occurrence of $C$, and the offset is uniquely determined.
Hence the other occurrence also starts at the same offset $r$.

Next assume that one occurrence starts at offset $r \in \{3,4\}$.
Let $\alpha = C[r..5)$, so $\alpha = {\rm bc}$ for $r=3$ and $\alpha = {\rm c}$ for $r=4$.
If an occurrence starts away from a boundary, then the prefix $\alpha$ determines its offset uniquely.
Suppose, for the sake of contradiction, that the other occurrence starts at a different offset.
Since the prefix $\alpha$ determines every non-boundary offset, the other occurrence must start at a boundary.
In this case, the other occurrence of $u$ starts at offset $0$ with prefix $\alpha$.
If $r = 4$ $(\alpha = {\rm c})$, then the first factor in the factorization beginning at this occurrence must be $C$.
If $r = 3$ $(\alpha = {\rm bc})$, then the first and second factors in this factorization must be ${\rm b}$ and $C$, respectively.
In both cases, $u$ starts with $\alpha {\rm bb}$.
Consider the occurrence of $u$ with offset $r$, and let its starting position be $t$.
Since $\alpha$ appears as a suffix of the occurrence of $C$ starting at position $t - r$,
the factorization of $x_k$ at position $t - r$ begins with $C|{\rm b}|{\rm b}$.
This factorization implies that $x'_k$ contains the square ${\rm bb}$, contradicting the square-freeness of $x'_k$~(see Figure~\ref{fig:offset_diff}).
Thus, whenever one occurrence of $u$ starts away from a boundary, the other occurrence starts at the same offset in $\{1, 2, 3, 4\}$.
Equivalently, if one occurrence starts at a boundary, then the other occurrence also starts at a boundary.

If both occurrences of $u$ start at boundaries, then both endpoints of the first occurrence $x_k[i..p)$ are boundaries.
The equality of the two occurrences and the uniqueness of the factorization imply that $x_k[p..p+\ell)$ has the same factorization.
Thus, both endpoints of $x_k[p..p+\ell)$ are also boundaries.
Let $v$ be the unique word such that $\mu(v) = x_k[i..p)$.
Then both endpoints of $w$ are boundaries, and $w = \mu(v^2)$.
Hence $v^2$ occurs in $x'_k$.
This contradicts the square-freeness of $x'_k$.
Otherwise, let $r \in \{1, 2, 3, 4\}$ be the common offset of the two occurrences of $u$.
Let $\alpha$ be the suffix of $C$ of length $5 - r$ and $\beta$ be the prefix of $C$ of length $r$.
Since $\beta \alpha = C$ and the center position $p$ of the square $w = uu$ lies on an occurrence of $C$ with offset $r = |\beta|$,
$u$ can be written as $u = \alpha z \beta$, where $z = x_k[i+|\alpha|..p-|\beta|)$~(see Figure~\ref{fig:square_contradiction}).
Since the positions $i+|\alpha|$, $p-|\beta|$, $p+|\alpha|$, and $p+\ell-|\beta|$ are boundaries,
the two subwords $x_k[i+|\alpha|..p-|\beta|)$ and $x_k[p+|\alpha|..p+\ell-|\beta|)$ have exactly the same factorization.
Let $v$ be the unique word such that $\mu(v) = z$.
Hence, $w = \alpha \mu(v) \beta \alpha \mu(v) \beta$.
Since both the starting and ending position of $w$ lie inside occurrences of $C$,
by extending this subword to the left by $r$ positions and to the right by $5 - r$ positions, we obtain
$C \mu(v) C \mu(v) C$ as a subword of $x_k$.
This subword is equal to $\mu({\rm c} v {\rm c} v {\rm c})$.
Thus, $x'_k$ contains the square $({\rm c} v)^2$.
This contradicts the square-freeness of $x'_k$.
\end{proof}

\begin{figure}[t]
    \centering
    \includegraphics[width=1.0\linewidth]{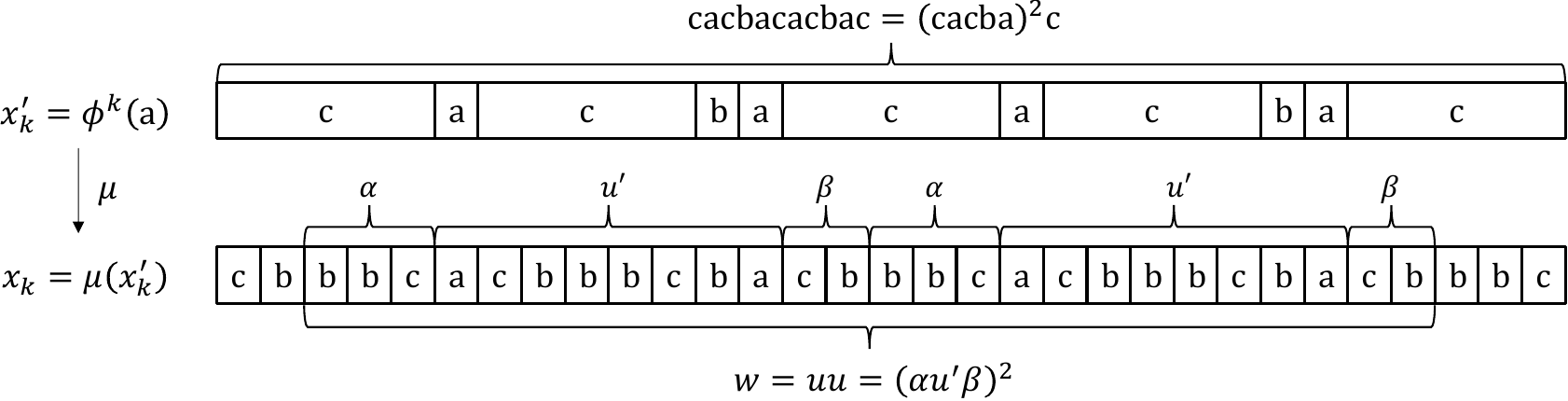}
    \caption{
    Illustration for Lemma~\ref{lem:standard_square_free}.
    In the figure, $x_k$ has a square $w = uu = ({\rm bbcacbbbcbacb})^2$ and $u$ is written as $u=\alpha z \beta$
    where $\alpha = {\rm bbc}, \beta = {\rm cb}$, and $z = {\rm acbbbcba}$.
    In this case, $w = uu$ is generated from $({\rm cacba})^2 c$ and thus $x'_k$ has a square $({\rm cacba})^2$.
    }
    \label{fig:square_contradiction}
\end{figure}

By combining the above results, we can obtain the main result.
\begin{theorem} \label{thm:parameterized_square_freeness}
    For any $k \geq 0$,
    the ternary word $x_k = \mu(\phi^k({\rm a}))$ is $3^+$-parameterized-square-free.
\end{theorem}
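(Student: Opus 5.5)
The plan is to combine Lemma~\ref{lem:no_parameterized} and Lemma~\ref{lem:standard_square_free} in a short argument by contradiction.

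Fix $k \ge 0$ and suppose $x_k$ contains a $3^+$-parameterized square. Then there is a subword $w = uv$ of $x_k$ with $|u| = |v| = \ell \ge 3$, where $u$ and $v$ are parameterized equivalent. By Lemma~\ref{lem:no_parameterized}, the bijection witnessing this equivalence is forced to be the identity, so $u = v$. Hence $w = uu$ is a standard square of total length $2\ell \ge 6$ occurring in $x_k$; that is, $w$ is a $3^+$-square. This contradicts Lemma~\ref{lem:standard_square_free}, which states that $x_k$ is $3^+$-square-free. Therefore $x_k$ contains no $3^+$-parameterized square.

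There is no genuine obstacle, since both lemmas are stated for every $k \ge 0$ and their length thresholds match: each is phrased for halves of length at least $3$, i.e., total length at least $6$. The only point to check is that "$3^+$" means the same thing in both lemmas and in the theorem, namely $|u| = |v| \ge 3$. With that confirmed, the theorem follows directly.
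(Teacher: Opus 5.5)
Your proposal is correct and matches the paper's proof: Lemma~\ref{lem:no_parameterized} reduces any $3^+$-parameterized square in $x_k$ to a standard square $uu$ with $|u|\ge 3$, which Lemma~\ref{lem:standard_square_free} rules out. (Strictly, the lemma states $u=v$ directly; that the bijection is the identity is how its proof obtains this, but the conclusion you use is the same.)
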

\begin{proof}
By Lemma~\ref{lem:no_parameterized},
if a subword $w$ is a $3^+$-parameterized square, then $w$ must be a standard square.
However, by Lemma~\ref{lem:standard_square_free}, such square does not exist.
Therefore $x_k$ does not contain any $3^+$-parameterized-square.
\end{proof}

\section{$3^+$-Order-Preserving-Square-Free Binary Words} \label{app:3plus}
In this section, we show that there exists an infinite $3^+$-order-preserving-square-free binary word.
The proof is similar to \cite{prodinger79infinite}, where it is observed that applying the morphism $\psi:\Sigma_3^* \rightarrow \Sigma_2^*$ with $\psi({\rm a}) = 0000, \psi({\rm b}) = 0101$, and $\psi({\rm c}) = 1111$ to a square-free ternary word~$x$ yields a $3^+$-square-free binary word (in the strict sense).
We show that the same property holds when we slightly modify the morphism $\psi$ to another morphism $\tau:\Sigma_3^* \rightarrow \Sigma_2^*$~(Lemma~\ref{lem:square_free}).
Then, we construct an infinite binary word by combining the two morphisms $\tau$ and $\phi$~(Theorem~\ref{thm:3_order-preserving}).

Let $\tau:\Sigma_3^* \rightarrow \Sigma_2^*$ be the morphism defined as
\begin{align*}
{\rm a} &\mapsto 00, \\
{\rm b} &\mapsto 0101, \\
{\rm c} &\mapsto 11.
\end{align*}
The following lemma will be proved in Section~\ref{sec:lemmaproof}.
\begin{lemma} \label{lem:square_free}
For any square-free ternary word $x$, the word $\tau(x)$ is $3^+$-square-free.
\end{lemma}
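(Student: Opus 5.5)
The plan is to follow the scheme of Lemma~\ref{lem:standard_square_free}. We split squares into short ones, handled by a finite check, and long ones, handled by a synchronization (desubstitution) argument that pulls a square in $\tau(x)$ back to a square in $x$.

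First, the structure of $\tau(x)$. Since $x$ is square-free, it contains none of ${\rm aa}$, ${\rm bb}$, ${\rm cc}$. So $\tau(x)$ is a concatenation of blocks $00$, $0101$, $11$, and no two consecutive blocks are equal. The goal is to locate block boundaries from local information.
\begin{itemize}
\item The factor $00$ arises either as $\tau({\rm a})$, or across a boundary ending in $0$ and followed by $0\cdots$. No block ends in $0$ except $\tau({\rm a})$, and ${\rm aa}$ is excluded. Hence $00$ occurring inside $\tau(x)$ is exactly $\tau({\rm a})$, up to the boundary effect ${\rm ba}\mapsto 0101\,00$, where the $1$ separates.
\item The factor $11$ arises from $\tau({\rm c})$, or straddles a boundary in ${\rm bc}\mapsto 010111$.
\item The factor $1010$ forces the interior of $\tau({\rm b})$ up to ${\rm b}$ followed by ${\rm a}$ or ${\rm c}$.
\end{itemize}
From a small table of all length-$2$ and length-$3$ factors of square-free ternary words and their images, I will derive a synchronization statement: there is a constant $L$ (I expect $L\le 6$) such that every factor of $\tau(x)$ of length at least $L$ has a unique cutting into blocks, up to a partial block at each end. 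Concretely, the boundary positions inside a window of length $L$ are determined by the window's content.

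Second, the short squares. Fix a small bound $M$, say $2\ell\le 2L+8$. Any square $uu$ with $3\le \ell\le M$ in $\tau(x)$ lies inside $\tau(y)$ for a factor $y$ of $x$ of bounded length, roughly $M/2+2$ letters. Square-free ternary words of that length are finitely many and few, so I will exhaustively verify, by hand or by listing, that no such $\tau(y)$ contains a square of half-length in $[3..M]$. This base case is also where small overlaps such as $010111$ and $110101$ must be ruled out directly.

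Third, the long squares, with $\ell>M$. Let $w=uu$ start at position $i$ and have center $p$. Because $|u|\ge L$, synchronization applied to the two equal copies of $u$ shows the following. Both copies carry block boundaries at the same relative positions, except possibly in the first and last few symbols. Thus the second copy starts at the same offset inside a block as the first.

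I then write $u=\alpha\,\tau(v)\,\beta$, where $\alpha$ is a proper suffix of the block containing position $i$ and $\beta$ is a proper prefix of the block containing $p$. The copy starting at $p$ gives $\alpha$ as a suffix of that same central block, so the central block is exactly $\beta\alpha$. I must show that the block $B_1$ straddling the start and the block $B_3$ straddling the end both equal this central block $B_2$. Then $w$ extends to $\tau(dvdvd)$ with $d=\tau^{-1}(B_2)$, so $(dv)^2$ occurs in $x$, a contradiction. When both $\alpha$ and $\beta$ are nonempty and the offsets agree, this follows as in Lemma~\ref{lem:standard_square_free}. When the offset is $0$, $w=\tau(vv)$ directly gives the square $vv$ in $x$.

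The main obstacle is this recombination step in the cases where $\alpha$ or $\beta$ is a single symbol. For example, a suffix $0$ might come from $00$, and a suffix $1$ might come from $0101$ or from $11$. In these cases the partial block does not determine which block it belongs to. I will handle them by looking at the symbols of $\tau(v)$ adjacent to $\alpha$ and $\beta$ and using the forbidden adjacencies ${\rm aa}$, ${\rm bb}$, ${\rm cc}$ in $x$ to pin down the block. If a case survives, I will instead shift the square by one position, since $w$ and its neighbours are still squares when the letters adjacent to $w$ agree. This reduces it to an offset-$0$ square or to the short case already checked.
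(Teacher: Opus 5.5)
Your overall architecture is the paper's: an exhaustive check for short squares, a synchronization statement for long equal factors, and desubstitution via $u=\alpha\,\tau(v)\,\beta$. In the paper, the synchronization step is Lemmas~\ref{lem:mod2_neq_substring} and~\ref{lem:mod4_diff2_substring_pref}, which give $o_i=o_j$ for equal factors of length at least $7$; your expected bound $L\le 6$ is also fine.

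Two smaller points first.
\begin{itemize}
\item What synchronization gives you is equality of $|\alpha|$, the distance to the next boundary (the paper's $o_i$). It does not give equality of the offset from the start of the straddling block, which $u$ does not determine: a leading $1$ can sit in $11$ or in $0101$.
\item Your first bullet is false, since $00$ also straddles a boundary in $\tau({\rm ab})=000101$. Synchronization therefore has to come from a case check over pairs of offsets, as in the paper, not from that bullet.
\end{itemize}

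The step that fails as written is the recombination goal $B_1=B_2=B_3$. It is stronger than needed, and in the single-symbol cases it cannot be established locally. For $\alpha=\beta=1$, the block $B_1$ may be $11$ or $0101$, and both can precede a block $00$ of $\tau(v)$. So inspecting adjacent symbols and the forbidden ${\rm aa},{\rm bb},{\rm cc}$ will not decide $B_1$. For the same reason, the appeal to Lemma~\ref{lem:standard_square_free} does not transfer: there $C$ is the only factor with interior positions, whereas here all three factors have them.

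The missing observation is that one side suffices:
\begin{itemize}
\item if $B_1=B_2=\tau(d)$, then $w[i-|\beta|..k-|\beta|)=\tau(dvdv)$, so $(dv)^2$ occurs in $x$;
\item if $B_2=B_3=\tau(d)$, then $w[i+|\alpha|..k+|\alpha|)=\tau(vdvd)$, so $(vd)^2$ occurs in $x$.
\end{itemize}
Moreover, one side is always forced:
\begin{itemize}
\item a partial block of length at least $2$ ($\alpha\in\{01,101\}$ or $\beta\in\{01,010\}$) can only come from $0101$, which fixes that side;
\item if $\alpha=\beta=0$, then $B_1=00$, because only $00$ ends in $0$;
\item if $\alpha=\beta=1$, then $B_3=11$, because only $11$ starts with $1$.
\end{itemize}
These are exactly the paper's Cases 3-(b), 3-(c)-(i) and 3-(c)-(ii).

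Your shift-by-one fallback is this argument in disguise, and it always succeeds. Since shifting preserves length, it lands at offset $0$, never in the short case. But you need precisely these forcing facts to know that the letters adjacent to $w$ agree on the correct side, so you should state them rather than leave the shift conditional.
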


Let $x'_k = \phi^{k}({\rm a})$ and $y_k = \tau(x'_k)$ for every $k \ge 0$.
Recall that each $x'_k$ is square-free, and thus, each $y_k$ is $3^+$-square-free by Lemma~\ref{lem:square_free}.
We prove the main theorem of this section.
\begin{theorem} \label{thm:3_order-preserving}
    For any $k \geq 0$,
    the binary word $y_k = \tau(\phi^k({\rm a}))$ is $3^+$-order-preserving-square-free.
\end{theorem}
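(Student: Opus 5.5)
The plan is to reduce order-preserving squares over $\Sigma_2$ to two kinds: standard squares, and squares whose halves are constant words. Lemma~\ref{lem:square_free} already handles standard squares, so what remains is a short structural check on runs in $y_k$.

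\textbf{Step 1 (binary order-preserving squares).} Let $u,v\in\Sigma_2^\ell$ be order-preserving equivalent with $0\prec 1$. If $u$ contains both letters, say $u[i]=0$ and $u[j]=1$, then $u[j]\not\preceq u[i]$ forces $v[j]\not\preceq v[i]$, so $v[i]=0$ and $v[j]=1$. Every position of $u$ compares with $i$ or $j$ in the same way, so $v[m]=u[m]$ for all $m$, i.e. $v=u$. If instead $u$ is constant, the pattern forces $v$ to be constant as well, possibly with the other letter. Hence every $3^+$-order-preserving square in a binary word is either a standard $3^+$-square, or of the form $0^\ell1^\ell$ or $1^\ell0^\ell$ with $\ell\ge 3$. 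By Lemma~\ref{lem:square_free}, applied to the square-free word $x'_k$, the word $y_k$ contains no standard $3^+$-square. It therefore suffices to show that $y_k$ contains neither $000111$ nor $111000$.

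\textbf{Step 2 (runs of length at least three).} I would analyze how long runs arise from the images $\tau({\rm a})=00$, $\tau({\rm b})=0101$, $\tau({\rm c})=11$, using that $x'_k$ contains no ${\rm aa}$ or ${\rm cc}$.
\begin{itemize}
\item A run $000$ must use the block $00=\tau({\rm a})$ together with an adjacent $0$. Since $\tau({\rm b})$ ends in $1$, a preceding ${\rm b}$ adds no zero, so the extra $0$ must come from a following ${\rm b}$. This gives ${\rm ab}\mapsto 000101$, and so $000$ is always followed by $101$. In particular $000111$ cannot occur.
\item Symmetrically, a run $111$ arises only from ${\rm bc}\mapsto 010111$, since ${\rm cb}$ gives $110$ and ${\rm cc}$ is excluded. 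The letter after this ${\rm c}$ is ${\rm a}$ or ${\rm b}$. If it is ${\rm b}$, only a single $0$ follows. If it is ${\rm a}$, we get $00$, and a third $0$ requires the next letter to be ${\rm b}$.
\end{itemize}
So $111000$ occurs in $y_k$ only if ${\rm bcab}$ is a factor of $x'_k$.

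\textbf{Step 3 (excluding ${\rm bcab}$).} This is the only genuinely new combinatorial fact, and I expect it to be the main obstacle. For $k\ge1$ write $x'_k=\phi(z)$ with $z=x'_{k-1}$.
\begin{itemize}
\item The factor ${\rm ab}$ occurs in $\phi(z)$ only as the prefix of some $\phi({\rm a})={\rm abc}$. No image ends in ${\rm a}$, and $\phi({\rm b})={\rm ac}$ has ${\rm c}$ after its ${\rm a}$.
\item So the ${\rm a}$ of ${\rm bcab}$ starts an image, and the preceding ${\rm c}$ is the last letter of $\phi({\rm a})$ or $\phi({\rm b})$, since $\phi({\rm c})={\rm b}$ contains no ${\rm c}$.
\item If this ${\rm c}$ ends $\phi({\rm b})={\rm ac}$, the letter before it is ${\rm a}$, not ${\rm b}$.
\item Otherwise it ends $\phi({\rm a})={\rm abc}$, and then $z$ contains ${\rm aa}$, contradicting the square-freeness of $z$.
\end{itemize}
The small cases $k=0$ and $k=1$ are checked directly. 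This completes the argument.
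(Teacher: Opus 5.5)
Your proposal is correct and follows essentially the same route as the paper: it uses Lemma~\ref{lem:square_free} to reduce to halves $\{0^\ell,1^\ell\}$, shows that $0^31^3$ and $1^30^3$ can only arise inside $\tau({\rm bcab})$, and excludes ${\rm bcab}$ by desubstituting through $\phi$ to obtain ${\rm aa}$ in $x'_{k-1}$. Your version is slightly more explicit (for instance, you note that $0^31^3$ cannot occur at all because every $000$ is followed by $101$), and the separate check of $k=1$ is unnecessary because your desubstitution argument already covers every $k\ge 1$.
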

\begin{proof}
Assume, for the sake of contradiction, that there exists a subword $w = uv$ $(|u|=|v|)$ of $y_k$ that is a $3^+$-order-preserving square of length $2\ell$ ($\ell \geq 3$).
Since $y_k$ is $3^+$-square-free and $\ell \geq 3$, we have $u \neq v$.
If either $u$ or $v$ contains both $0$ and $1$, then the only order-preserving bijection between their alphabets is the identity.
Therefore, since $u \neq v$, the only possible case is when both $u$ and $v$ consist of a single repeated character but differ from each other (i.e., $\{u, v\} = \{0^\ell, 1^\ell\}$).
To prove that such a square cannot occur in $y_k$, it suffices to show that $y_k$ contains neither $0^3 1^3$ nor $1^3 0^3$ as a subword.

Since $\tau({\rm a})=00$, $\tau({\rm b})=0101$, and $\tau({\rm c})=11$, any occurrence of $0^3$ (resp. $1^3$) in $y_k=\tau(x'_k)$ is contained in $\tau({\rm aa})$ or $\tau({\rm ab})$ (resp. $\tau({\rm cc})$ or $\tau({\rm bc})$), if it exists.
Since $x'_k$ is square-free, it contains neither $\rm aa$ nor $\rm cc$.
Thus, $0^3$ can appear only as the prefix of $\tau({\rm ab})=000101$, and $1^3$ can appear only as the suffix of $\tau({\rm bc})=010111$.
Therefore, a subword of the form $0^3 1^3$ or $1^3 0^3$ can occur only inside $\tau({\rm bcab})$, so that
\[
\tau({\rm bcab}) = \tau({\rm b})\tau({\rm c})\tau({\rm a})\tau({\rm b}) = 010\,1^3 0^3\,101.
\]
However, $\rm bcab$ cannot occur in $x'_k$.
If $k=0$, then this is immediate from $x'_0={\rm a}$.
If $k \ge 1$, then both $\rm bc$ and $\rm ab$ can only be generated by applying $\phi$ to the character $\rm a$.
Thus, an occurrence of $\rm bcab$ in $x'_k=\phi(x'_{k-1})$ would imply an occurrence of $\rm aa$ in $x'_{k-1}$, contradicting the square-freeness of $x'_{k-1}$.
Therefore, $y_k$ contains no $3^+$-order-preserving square.
\end{proof}

In the rest of this section, we give a proof of Lemma~\ref{lem:square_free}.

\subsection{Proof of Lemma \ref{lem:square_free}}\label{sec:lemmaproof}

Before starting the proof, we introduce some notation.
Let $x$ be a square-free ternary word.
We denote $w = \tau(x)$.
A subword $w[i..j)$ of $w$ is called a \emph{block} if it is exactly generated from the subword $x[s..e)$,
i.e., there exists a pair of integers $s$ and $e$ such that $\tau(x[0..s)) = w[0..i)$ and $\tau(x[s..e)) = w[i..j)$ hold.
We denote $\calF = \{ \tau({\rm a}), \tau({\rm b}), \tau({\rm c}) \} = \{ 00, 0101, 11\}$.
In this subsection, we define a \emph{factorization} of a block $z$ as a decomposition $z = f_1 \cdots f_m$ with $f_i \in \calF$ for every $i$.
The words $f_i$ are called \emph{factors}.
Note that this convention differs from the factorization used in Lemma~\ref{lem:standard_square_free}.
Since $\calF$ is prefix-free, such a factorization is unique whenever it exists.
Every block $w[i..j)$ has such a factorization by definition, and therefore its factorization is uniquely defined.
Moreover, the factorization of a block $z$ determines a unique word $v$ such that $\tau(v) = z$.
A position $i$ with $0 \leq i \leq |w|$ is called a \emph{boundary} if $i = |w|$ or a factor starts at position $i$ in the factorization of $w$.
For any position $i$, let $o_i$ and $o'_i$ be the smallest non-negative integers such that $i + o_i$ and $i - o'_i$ are boundaries, respectively, where $0 \leq o_i, o'_i \leq 3$.
In particular, $o_i = 0$ if and only if $o'_i = 0$.
Since the length of a factor is either two (for $00$, $11$) or four (for $0101$),
$o_i \geq 2$ implies that the factor ending at position $i + o_i$ is $0101$.
Similarly, $o'_i \geq 2$ implies that the factor starting at position $i - o'_i$ is $0101$.

Next, we introduce Fact~\ref{fac:short_square_free}, which states that $w$ contains no short squares of lengths between $6$ and $12$.
Let $w[i.. j)$ be a subword of $w$ of length $2\ell$.
By construction of $w$, the subword $w[i-o'_i.. j+o_j)$ is a block generated from a square-free subword of $x$ of length at most $\ell + 1$.
Indeed, each factor has length at least two, so a subword of length $2\ell$ can intersect at most $\ell+1$ factors.
Thus, we can verify Fact~\ref{fac:short_square_free} by enumerating all words in $\{ \tau(v) \mid v \in \Sigma_3^*, 1 \leq |v| \leq 7, \text{$v$ is square-free} \}$ and checking $3^+$-square-freeness\footnote{The source code is available at \url{https://github.com/koeppl/squarechecker/check_short_string_3plus_square_freeness.py}}.

\begin{fact} \label{fac:short_square_free}
    For every $3 \le \ell \le 6$, the word $w$ contains no square of length $2\ell$.
\end{fact}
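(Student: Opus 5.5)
The plan is to reduce Fact~\ref{fac:short_square_free} to a finite check, following the reduction sketched just before it. Fix $3 \le \ell \le 6$ and suppose $w[i..j)$ with $j-i = 2\ell$ is a square in $w=\tau(x)$. We extend it to the enclosing block $w[i-o'_i..j+o_j)$; by the definitions of $o_i$ and $o'_i$ this is a block, so it equals $\tau(v)$ for a unique subword $v$ of $x$.

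\textbf{Bounding $|v|$.} Every factor in $\calF$ has length at least two. The only factors of the enclosing block that fail to lie entirely inside $w[i..j)$ are the one containing position $i$ and the one containing position $j-1$. So if $w[i..j)$ meets $m$ factors, then $2(m-2) \le 2\ell - 2$, which gives $m \le \ell+1 \le 7$. Being a subword of $x$, the word $v$ is square-free, and it is nonempty.

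\textbf{Finite check.} Every square of length $2\ell$ in $w$ with $3\le\ell\le 6$ therefore already occurs in $\tau(v)$ for some square-free $v \in \Sigma_3^*$ with $1\le |v|\le 7$. So it suffices to enumerate all such $v$ and verify that no $\tau(v)$ contains a square of length $6,8,10,$ or $12$.
\begin{itemize}
\item The square-free ternary words of length at most $7$ form a small set, at most a few hundred (for example $3\cdot 2^{6}$ words of length $7$ avoid $\rm aa$, $\rm bb$, $\rm cc$, and fewer are square-free).
\item Each image $\tau(v)$ has length at most $28$.
\item Checking all windows of these even lengths is routine and is what the referenced script does.
\end{itemize}

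\textbf{Main obstacle.} The only real subtlety is making sure the reduction is sound at the ends. The offsets $o'_i,o_j$ must be taken with respect to the global factorization of $w$, which is exactly why blocks are defined via prefixes $\tau(x[0..s))$. Then the enclosing word really is the image of a contiguous subword of $x$, and not merely some $\calF$-parse. Uniqueness of the parse, from $\calF$ being prefix-free, ensures the two agree. Once that is settled, the computation decides the fact completely.
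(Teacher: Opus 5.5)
Your proposal is correct and follows essentially the same route as the paper. You extend the window to the enclosing block $w[i-o'_i..j+o_j)$, bound the number of factors it meets by $\ell+1\le 7$ because every factor has length at least two, and then settle the claim by exhaustively checking $\tau(v)$ for all square-free ternary $v$ with $1\le |v|\le 7$.
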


We next establish two lemmas, which rule out long equal subwords with different boundary offsets.
\begin{lemma} \label{lem:mod2_neq_substring}
There is no pair of subwords $w[i..i + \ell)$ and $w[j..j + \ell)$ such that $\ell \geq 7$, $w[i..i + \ell) = w[j..j + \ell)$, and $o_{i} \not\equiv o_{j} \pmod 2$.
\end{lemma}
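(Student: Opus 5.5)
The plan is to turn the parity condition on the offsets into a parity condition on the starting positions, and then to use a local "no $10$ at even positions" property of $w=\tau(x)$.

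\emph{Step 1 (parity of boundaries).} Every factor in $\calF=\{00,0101,11\}$ has even length, and the factorization of $w$ starts at position $0$, so every boundary is an even position. Since $i+o_i$ is a boundary, $o_i\equiv i \pmod 2$. The hypothesis $o_i\not\equiv o_j \pmod 2$ is therefore equivalent to $i\not\equiv j\pmod 2$.

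\emph{Step 2 (even-aligned pairs).} Call $w[p..p+2)$ with $p$ even an \emph{even-aligned pair}, and suppose $p+2\le |w|$. I claim $w[p..p+2)\in\{00,01,11\}$, i.e.\ it is never $10$.
\begin{itemize}
\item If $p$ is a boundary, the pair is the length-$2$ prefix of a factor, namely $00$, $01$ or $11$.
\item Otherwise, since every boundary is even, $p$ lies strictly inside a factor at even offset. This forces the factor to be $0101$ with $p$ at offset $2$, so the pair is again $01$.
\end{itemize}

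\emph{Step 3 (the common word has no descent).} Suppose $u=w[i..i+\ell)=w[j..j+\ell)$ with $\ell\ge 7$ and $i\not\equiv j\pmod 2$. Fix $t\in\{0,\dots,\ell-2\}$. Exactly one of $i+t$ and $j+t$ is even, so by Step~2 the pair $u[t..t+2)$ is an even-aligned pair in one of the two occurrences, hence $u[t..t+2)\neq 10$. As this holds for every $t$, the word $u$ contains no $10$ at all. Thus $u=0^a1^b$ with $a+b=\ell\ge 7$, so $u$ contains $0^4$ or $1^4$.

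\emph{Step 4 (bounding runs).} It remains to show that $w$ has no run of four equal letters. I will do this by a short case analysis on how a run can cross factor boundaries.
\begin{itemize}
\item \emph{Runs of $0$.} Inside the factors, $0$-runs have length $2$ (from $00$) or $1$ (from $0101$). A longer run must use $00$ together with a neighbouring $0$. Since $0101$ ends in $1$ and $11$ contains no $0$, the only possibility is $00$ followed by $0101$, giving $000$. Extending this to length $4$ would need $\tau(\mathrm{aa})$, impossible because $x$ is square-free.
\item \emph{Runs of $1$.} Symmetrically, $1$-runs have length at most $3$, arising only from $0101\cdot 11$; length $4$ would need $\tau(\mathrm{cc})$.
\end{itemize}
This contradicts Step~3.

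I expect the main care to be needed in Step~2, specifically in showing that no even position falls at an odd offset inside a factor. This is guaranteed by all factors having even length. The run bound in Step~4 is routine but must use the square-freeness of $x$, namely the absence of $\mathrm{aa}$ and $\mathrm{cc}$.
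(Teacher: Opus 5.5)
Your proof is correct, and it takes a genuinely different route from the paper's.

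The paper argues locally. Assuming $o_i<o_j$, it first disposes of the pair $(o_i,o_j)=(0,3)$: there the occurrence at $j$ begins with $101$, which cannot begin a factor. In every other case $o_j=o_i+1$, so the factor $f_j$ is read one position to the right inside $f_ig_i$. A case split on $f_i\in\{0101,11,00\}$ then yields one of two contradictions: a factor beginning with $10$, which is impossible, or $f_ig_i\in\{0000,1111\}=\{\tau(\mathrm{aa}),\tau(\mathrm{cc})\}$, which contradicts square-freeness of $x$.

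You instead argue globally, in three moves:
\begin{itemize}
\item Since all boundaries are even, you convert offset parity into position parity.
\item You prove the invariant that no $10$ starts at an even position. Hence a word occurring at positions of both parities avoids $10$ entirely and must be $0^a1^b$.
\item You bound runs by $3$, again using only the absence of $\mathrm{aa}$ and $\mathrm{cc}$, and this rules out $0^a1^b$ of length at least $7$.
\end{itemize}

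What your route buys: it needs neither the WLOG nor the special $(0,3)$ case. It makes explicit that the threshold on $\ell$ is exactly what forces a run of length $4$. It also proves a stronger structural fact: any subword occurring at positions of both parities is monotone.

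What the paper's route buys: it only inspects a bounded prefix of $u$. There the hypothesis $\ell\ge 7$ merely guarantees that $f_i$, $g_i$ and $f_j$ fit inside $u$. Its factor-by-factor style also matches Lemma~\ref{lem:mod4_diff2_substring_pref} and the case analysis in the proof of Lemma~\ref{lem:square_free}, which keeps the whole subsection uniform.
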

\begin{proof}
Assume on the contrary that there are two subwords $w[i..i + \ell) = w[j..j + \ell)$ of length $\ell \ge 7$ with $o_{i} \not\equiv o_{j} \pmod 2$.
Let $u = w[i..i+\ell) = w[j..j+\ell)$. Assume $o_i < o_j$ without loss of generality.
We first consider the case $o_i = 0, o_j = 3$.
In this case, $w[j-1..j+3)$ must be a factor, specifically $w[j-1..j+3) = 0101$.
Then, $w[j..j+3) = w[i..i+3) = 101$, which contradicts $o_i=0$ since no factor starts with $101$.

The remaining case is $o_j = o_i + 1$.
Let $f_i$ and $f_j$ be the factors starting at positions $i + o_i$ and $j + o_j$, respectively.
Since $o_j = o_i + 1$, we have $0 \leq o_i \leq 2$.
Together with $\ell \geq 7$ and the fact that every factor has length at most four, this ensures that the factor immediately following $f_i$ exists.
Let $g_i$ be this factor.
Since $\ell \ge 7$, both $f_i$ and $f_j$ are contained in $u = w[i..i+\ell) = w[j..j+\ell)$.
Further, since $o_j = o_i + 1$, the starting positions of the occurrences of $f_i$ and $f_j$ inside $u$ are different by one position.
These facts imply that $(f_ig_i)[1..|f_j|+1) = f_j$ holds.

We consider the following three cases according to the value of $f_i$.

\noindent \textbf{Case 1: $\boldsymbol{f_i = 0101}$:}
Then $f_j$ must start with $10$ because
$f_j[0..2) = u[o_j..o_j+2) = u[o_i+1..o_i+3) = f_i[1..3) = 10$.
However, no factor starts with $10$, a contradiction.

\noindent \textbf{Case 2: $\boldsymbol{f_i = 11}$:}
Then $f_j$ must start with $1$, hence $f_j = 11 \in \calF$.
From $(f_ig_i)[1..|f_j|+1) = f_j = 11$, $g_i$ also starts with $1$, so $g_i = 11$.
Therefore, $f_i g_i = 1111$, which is the image of $\mathrm{cc}$ under $\tau$.
This implies that $x$ contains the square $\mathrm{cc}$, contradicting the square-freeness of $x$.

\noindent \textbf{Case 3: $\boldsymbol{f_i = 00}$:}
Then $f_j$ must start with $0$, so $f_j$ is either $00$ or $0101$.
If $f_j = 00$, as in the previous case,
we obtain $g_i = 00$
from $(f_ig_i)[1..|f_j|+1) = f_j = 00$.
Then, $f_i g_i = 0000 = \tau(\mathrm{aa})$,
contradicting the square-freeness of $x$.
Otherwise, if $f_j = 0101$, then $g_i$ must start with $10$, which contradicts that $g_i \in \calF$.
\end{proof}

\begin{lemma} \label{lem:mod4_diff2_substring_pref}
There is no pair of subwords $w[i..i + \ell)$ and $w[j..j + \ell)$ such that $\ell \geq 7$, $w[i..i + \ell) = w[j..j + \ell)$, and $o_i - o_j \equiv 2 \pmod 4$.
\end{lemma}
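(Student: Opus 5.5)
The plan is to mirror the proof of Lemma~\ref{lem:mod2_neq_substring}. We assume equal subwords $u = w[i..i+\ell) = w[j..j+\ell)$ with $\ell \ge 7$ and $o_i - o_j \equiv 2 \pmod 4$. Since $0 \le o_i, o_j \le 3$, the unordered pair $\{o_i, o_j\}$ is either $\{0,2\}$ or $\{1,3\}$, and by symmetry we may assume $o_i < o_j$. In each case we read off prefixes of $u$ from one occurrence and feed them into the factorization at the other occurrence. This forces consecutive factors $0101\,0101 = \tau({\rm bb})$, contradicting the square-freeness of $x$. As before, we use that $\calF$ is prefix-free, that no factor starts with $1$ except $11$, and that no factor starts with $10$.

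\textbf{Case $o_i = 0$, $o_j = 2$.} Since $o_j \ge 2$, the factor ending at $j+2$ is $0101$, so $u[0..2) = 01$. At position $i$ (a boundary) the factor $f_i$ then starts with $01$, hence $f_i = 0101$ and $u[0..4) = 0101$. Now $u[2..4) = 01$ is the start of the factor $f_j$ at $j+2$, so $f_j = 0101$ and $u[2..6) = 0101$. Hence $u[4..6) = 01$ is the start of the factor $g_i$ following $f_i$, so $g_i = 0101$. Then $f_i g_i = \tau({\rm bb})$ occurs in $w$, a contradiction. This case needs only $\ell \ge 6$.

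\textbf{Case $o_i = 1$, $o_j = 3$.} Since $o_j = 3$, the factor ending at $j+3$ is $0101$ and begins at $j-1$, so $u[0..3) = 101$. At the $i$-side, position $i+1$ is a boundary and the factor $f_i$ there starts with $u[1..3) = 01$, so $f_i = 0101$ and $u[1..5) = 0101$. At the $j$-side, the factor $f_j$ at $j+3$ starts with $u[3..5) = 01$, so $f_j = 0101$ and $u[3..7) = 0101$; here we use $\ell \ge 7$. Then the factor $g_i$ after $f_i$ starts at $u[5]$ with $u[5..7) = 01$, so $g_i = 0101$. Again $f_i g_i = \tau({\rm bb})$, a contradiction.

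The only delicate point is bookkeeping: we must check that every factor invoked lies inside $u$, which is exactly where $\ell \ge 7$ enters, as in Lemma~\ref{lem:mod2_neq_substring}. We must also check that the case split is exhaustive. For the symmetric orientation $o_i > o_j$ we simply swap the roles of $i$ and $j$, since the hypothesis is symmetric. I expect the $\{1,3\}$ case to be the main obstacle, because it requires the full length $7$. Beyond that, the argument is short once the prefix propagation is set up.
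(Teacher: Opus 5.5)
Your proof is correct and takes essentially the paper's approach: because $o_j \ge 2$, the factor straddling position $j$ is $0101$, which forces $f_i = 0101$ and then, by the shift of two, $f_j = 0101$. The paper stops there and handles both cases at once via $o_j = o_i + 2$: the $0101$ factor ending at $j+o_j$ and $f_j$ are already consecutive and give $\tau({\rm bb})$, so your extra step to $g_i$, and the use of $\ell \ge 7$ it requires, is unnecessary.
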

\begin{proof}
Assume on the contrary that there are two subwords $w[i..i + \ell) = w[j..j + \ell)$ of length $\ell \ge 7$ with $o_{i} - o_j \equiv 2 \pmod 4$.
Let $u = w[i..i+\ell) = w[j..j+\ell)$. Assume $o_i < o_j$ without loss of generality.
Then,
the only possible pairs for $(o_i, o_j)$ are $(0, 2)$ and $(1, 3)$, so $o_j = o_i + 2$.
Let $f_i$ and $f_j$ be the factors starting at positions $i + o_i$ and $j + o_j$, respectively.
Also, let $e_j$ be the factor immediately preceding $f_j$.
Since $o_j \ge 2$, we have $e_j = 0101$.

From $u = w[i..i+\ell) = w[j..j+\ell)$ and $o_j = o_i + 2$, it follows that
$u[o_i..o_i+2) = u[o_j-2..o_j) = e_j[|e_j|-2.. |e_j|) = 01$.
Hence $f_i = 0101$, because it is the only factor that starts with $01$.
Also,
$u[o_j..o_j+2) = u[o_i+2..o_i+4) = f_i[2..4) = 01$,
so $f_j = 0101$.
Therefore, the two consecutive factors around position $j+o_j$ satisfy
$e_j = f_j = 0101$.
Consequently, $e_j f_j = 01010101 = \tau({\rm bb})$, which implies that $x$ contains the square ${\rm bb}$,
contradicting the square-freeness of $x$.
\end{proof}

Now, we are ready to prove
Lemma~\ref{lem:square_free}.

\begin{proof}[of Lemma~\ref{lem:square_free}]
For the sake of contradiction, assume $w$ contains a square $w[i..k)$ of length $2\ell \ge 6$. Let $j = i + \ell$ where $\ell = (k-i)/2$, so $w[i..k) = uu$ with $u = w[i..j) = w[j..k)$.
By Fact~\ref{fac:short_square_free}, $w$ has no square with $3 \leq \ell \leq 6$, thus $\ell \geq 7$.
Let $d = (o_i - o_j) \bmod 4$.
We classify squares in $w$ by the values of $o_i, o'_j, o_j, o'_k$ and $d$.

\noindent \textbf{Case 1: $\boldsymbol{d = 2}$:}
This case contradicts Lemma~\ref{lem:mod4_diff2_substring_pref}.

\noindent \textbf{Case 2: $\boldsymbol{d  \in \{ 1, 3\}}$:}
This case is equivalent to $o_i \not\equiv o_j \pmod{2}$. Thus, the existence of square $w[i..k) = uu$ contradicts Lemma~\ref{lem:mod2_neq_substring}.

\noindent \textbf{Case 3: $\boldsymbol{d = 0}$:}
In this case, $o_i = o_j$ holds.
Assume $w[i..j)$ is factorized as $w[i..j) = \alpha f_1 \cdots f_t \beta$, where $\alpha = w[i..i+o_i)$ and $\beta = w[j-o'_j..j)$ may be empty, and each $f_p$ is a factor.
Then, $w[j..k)$ is also factorized as $w[j..k) = \alpha f_1 \cdots f_t \beta$, because $w[i + o_i..j) = w[j + o_j..k)$, both $i + o_i$ and $j + o_j$ are boundaries, and the factorization of a block is unique.
More precisely, $o'_j = o'_k$ and $w[i..k) = \alpha f_1 \cdots f_t \beta \alpha f_1 \cdots f_t \beta$ hold.
We denote $f = f_1 \cdots f_t$, and let $g$ be the word such that $\tau(g) = f$.
Note that such $g$ is uniquely defined.

\textbf{Case 3-(a): $\boldsymbol{o_i = o_j = 0}$:}
In this case, both $i$ and $j$ are boundaries.
Hence, both $\alpha$ and $\beta$ are the empty word, and $w[i..j)$ is fully factorized as $w[i..j) = f_1 \cdots f_t = f$.
Likewise, $w[j..k) = f$, and thus $w[i..k) = w[i..j)w[j..k) = f^2$.
Since $\tau(g^2) = f^2$, the block $w[i..k)$ is generated from the word $g^2$.
Hence $g^2$ occurs in $x$, a contradiction.

\textbf{Case 3-(b): $\boldsymbol{o_i = o_j \in \{ 2, 3\}}$:}
In this case, the factors crossing positions $i$ and $j$ must be $0101$.
In particular, $w[i-o'_i..i+o_i) = w[j-o'_j..j+o_j) = 0101$.
Thus, the block $w[i-o'_i..k-o'_k)$ is factorized as $|0101|f|0101|f|$.
Therefore, this block is generated from the word $({\rm b} g)^2$, implying that $x$ contains a square
(see the top of Figure~\ref{fig:case2ab}), a contradiction.

\textbf{Case 3-(c)-(i): $\boldsymbol{o_i = o_j = 1 \text{ and } o'_j = o'_k = 1}$:}
In this case, the factor crossing position $j$ has length two, i.e.,  $w[j-o'_j..j+o_j)$ is either $00$ or $11$.
Let $c$ be the character such that $w[j-o'_j..j+o_j) = cc$.
Since $w[j-o'_j]$ and $w[j]$ correspond to the last and first characters of $u$, respectively, we have $\alpha = \beta = c$.
Therefore, $w[i..k) = uu$ is of the form $c f c c f c$.
If $c = 0$, the factor crossing position $i$ must be $00$, because it is the only factor that ends with $0$.
Then, the block $w[i-o'_i..k-o'_k)$ is factorized as $|00|f|00|f|$.
Therefore, this block is generated from the word $({\rm a} g)^2$, implying that $x$ contains a square (see the top of Figure~\ref{fig:case2d}), a contradiction.
If $c = 1$, the factor crossing position $k$ must be $11$, because it is the only factor that starts with $1$.
Then, the block $w[i+o_i..k+o_k)$ is factorized as $|f|11|f|11|$.
Therefore, this block is generated from the word $(g {\rm c})^2$, implying that $x$ contains a square (see the bottom of Figure~\ref{fig:case2d}), a contradiction.

\textbf{Case 3-(c)-(ii): $\boldsymbol{o_i = o_j = 1 \text{ and } o'_j = o'_k \neq 1}$:}
In this case, $o'_j = o'_k = 3$.
This is because $o_j = 1$, so $j$ is not a boundary and the factor crossing position $j$ has length $o_j + o'_j = 1 + o'_j$.
Every factor has length either $2$ or $4$.
The proof of this case is analogous to that of Case 3-(b).
In this case, the factors crossing positions $j$ and $k$ are $0101$, so the block
$w[i+o_i..k+o_k)$ is factorized as $|f|0101|f|0101|$.
Therefore, this block is generated from the word $(g {\rm b})^2$, implying that $x$ contains a square
(see the bottom of Figure~\ref{fig:case2ab}), a contradiction.

\begin{figure}[t]
    \centering
    \includegraphics[width=0.9\linewidth]{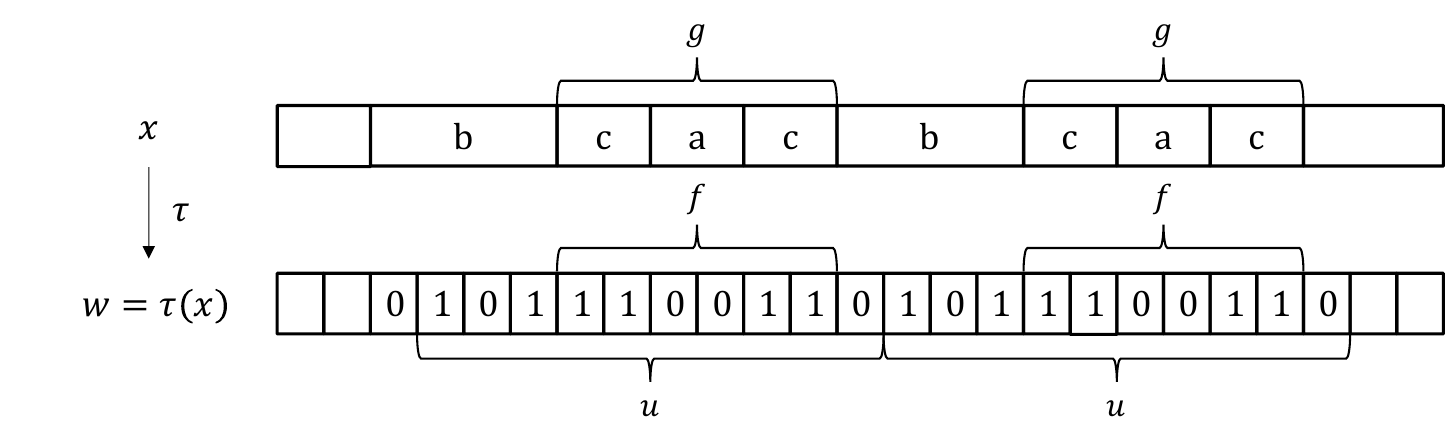}
    \includegraphics[width=0.9\linewidth]{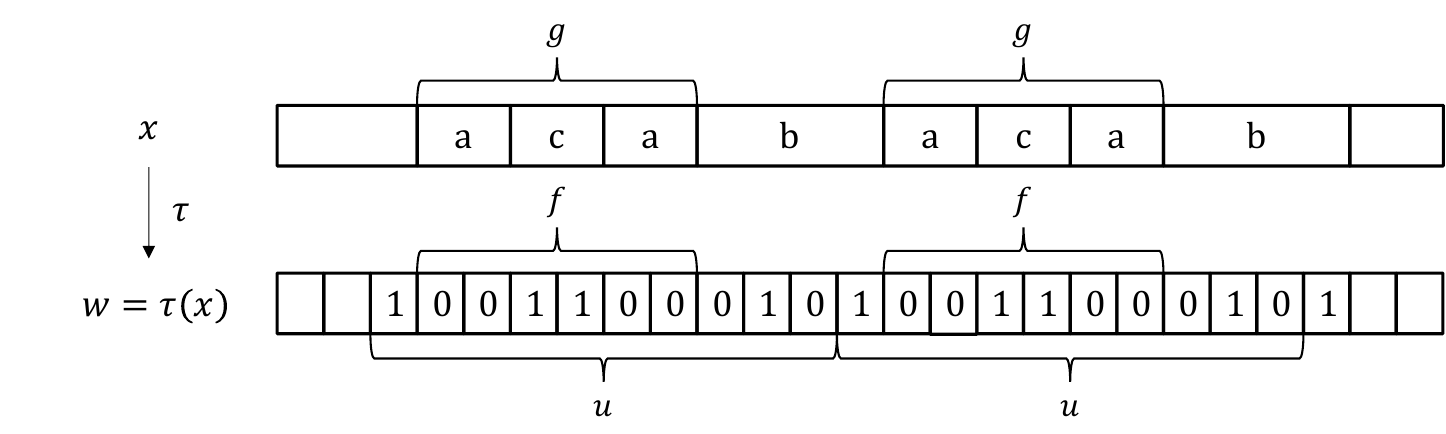}
    \caption{%
    Illustrations of examples for Case 3-(b) (top) and Case 3-(c)-(ii) (bottom) of Lemma~\ref{lem:square_free}.
    In the first figure, the word $w = \tau(x)$ contains a square $uu = (1011100110)^2$ and $x$ contains a square $({\rm b}g)^2 = ({\rm bcac})^2$.
    In the second figure, the word $w = \tau(x)$ contains a square $uu = (1001100010)^2$ and $x$ contains a square $(g{\rm b})^2 = ({\rm acab})^2$.
    }
    \label{fig:case2ab}
\end{figure}

\begin{figure}[t]
    \centering
    \includegraphics[width=0.9\linewidth]{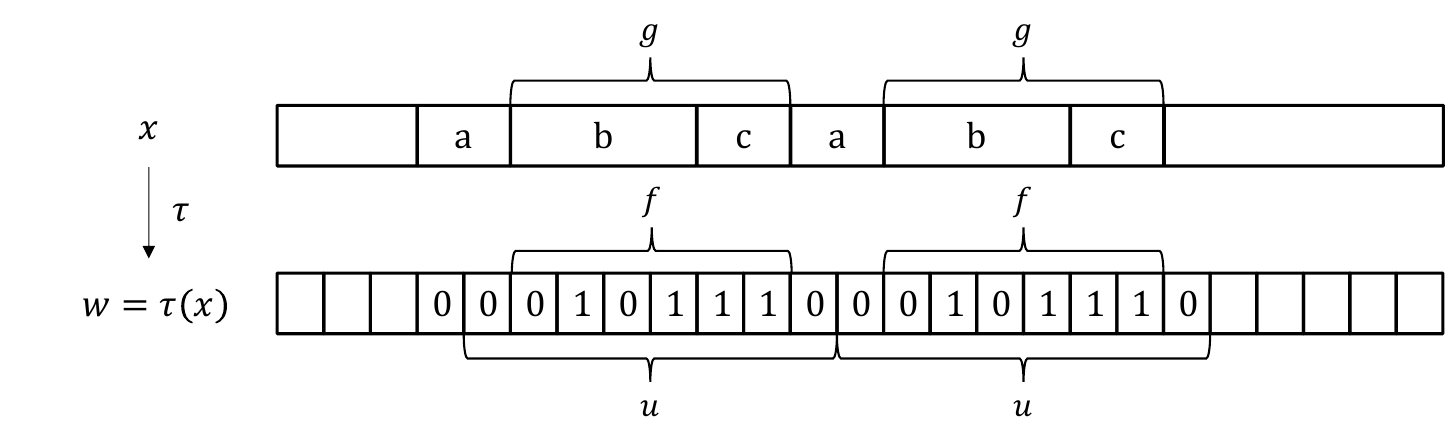}
    \includegraphics[width=0.9\linewidth]{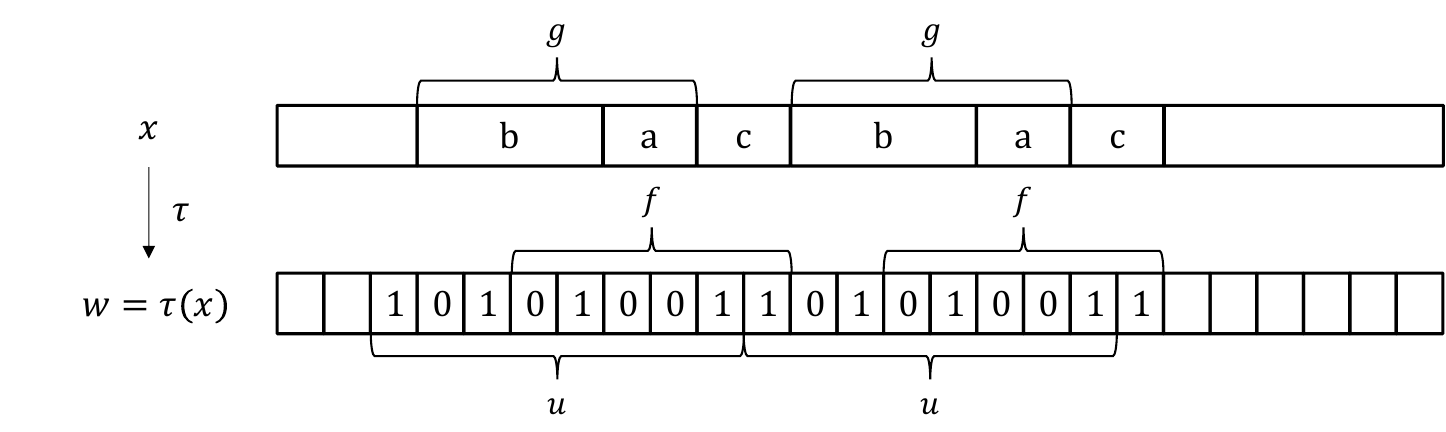}
    \caption{%
    Illustrations of examples for Case 3-(c)-(i) of Lemma~\ref{lem:square_free}.
    In the first figure, the word $w = \tau(x)$ contains a square $uu = (00101110)^2$ and $x$ contains a square $({\rm a}g)^2 = ({\rm abc})^2$.
    In the second figure, the word $w = \tau(x)$ contains a square $uu = (10101001)^2$ and $x$ contains a square $(g{\rm c})^2 = ({\rm bac})^2$.
    }
    \label{fig:case2d}
\end{figure}

The preceding cases exhaust all possible scenarios for a square $uu$ occurring in $w = \tau(x)$.
Since each of these cases leads to a contradiction with the square-freeness of $x$, we conclude that $w = \tau(x)$ is $3^+$-square-free.
\end{proof}

\section{Conclusions}
In this paper, we studied $\ell^+$-square-free words under several equivalence relations.
We gave combinatorial proofs of the existence of an infinite $3^+$-parameterized-square-free ternary word and an infinite $3^+$-order-preserving-square-free binary word.
Both infinite words were constructed using small morphisms.

Through computational experiments, we also determined the maximum lengths in the finite cases reported in Table~\ref{tab:longest_squarefree}.
Table~\ref{tab:longest_squarefree_instance} gives an explicit longest word for each such case.
These values and words were obtained with software tools\footnote{\url{https://github.com/koeppl/squarechecker}}.
We list alphabet sizes only up to $\sigma=5$.
For the finite entries in the tables, our experiments also gave the same values when $\sigma \ge 6$.

As a direction for future work, it would be interesting to study repetition thresholds~\cite{dejean72repetition,Badkobeh14finiterepetition} under these equivalence relations.
A repetition threshold is the smallest real number $r$ such that some infinite word avoids every factor of exponent greater than $r$.
This can be viewed as another direction in generalizing square-freeness: while $\ell^+$-square-freeness relaxes classical square-freeness by considering only long squares, repetition thresholds relax it by bounding the exponent of repetitions.
In the strict-equality setting, repetition thresholds have been studied extensively, including questions about the maximum exponent and the number of occurrences of factors attaining it.
Whether analogous results hold under other equivalence relations is an interesting question.

\begin{table}[t]
    \centering
    \setlength{\tabcolsep}{8pt}
    \caption{
    Examples of longest $\ell^+$-square-free words under various equivalence relations and alphabet sizes.
    Each finite entry gives one longest word.
    An entry marked $\infty$ indicates the existence of an infinite $\ell^+$-square-free word.
    A waved line indicates a conjectured result based on experimental results.
    A dash ($-$) indicates that infiniteness follows from another case (e.g., a smaller $\ell$ or $\sigma$).
    }
    \vspace{1em}

    \begin{minipage}[t]{\textwidth}
        \centering
        \textbf{Strict Equality}\\[0.5em]
        \begin{tabular}{c|cccc}
         $\sigma\backslash\ell$ & 1 & 2 & 3 & 4 \\
         \hline
         2 & ${\rm aba}$ & ${\rm abaabbaaabbbaabbab}$ & $\infty$ & $-$ \\
         3 & $\infty$ & $-$ & $-$ & $-$ \\
         4 & $-$ & $-$ & $-$ & $-$ \\
         5 & $-$ & $-$ & $-$ & $-$ \\
        \end{tabular}
    \end{minipage}

    \vspace{2em}

    \begin{minipage}[t]{\textwidth}
        \centering
        \textbf{Parameterized Equivalence}\\[0.5em]
        \begin{tabular}{c|cccc}
         $\sigma\backslash\ell$ & 1 & 2 & 3 & 4 \\
         \hline
         2 & ${\rm a}$ & ${\rm aaabaaa}$ & $\infty$ & $-$ \\
         3 & ${\rm a}$ & ${\rm aabaaabcc}$ & $-$ & $-$ \\
         4 & ${\rm a}$ & ${\rm aabaaabcc}$ & $-$ & $-$ \\
         5 & ${\rm a}$ & ${\rm aabaaabcc}$ & $-$ & $-$ \\
        \end{tabular}
    \end{minipage}

    \vspace{2em}

    \begin{minipage}[t]{\textwidth}
        \centering
        \textbf{Order-Preserving Equivalence}\\[0.5em]
        \begin{tabular}{c|cccc}
         $\sigma\backslash\ell$ & 1 & 2 & 3 & 4 \\
         \hline
         2 & ${\rm a}$ & ${\rm aaabaaa}$ & $\infty$ & $-$ \\
         3 & ${\rm a}$ & $\infty$ & $-$ & $-$ \\
         4 & ${\rm a}$ & $-$ & $-$ & $-$ \\
         5 & ${\rm a}$ & $-$ & $-$ & $-$ \\
        \end{tabular}
    \end{minipage}

    \vspace{2em}

    \begin{minipage}[t]{\textwidth}
        \centering
        \textbf{Cartesian-Tree Equivalence}\\[0.5em]
        \begin{tabular}{c|cccc}
         $\sigma\backslash\ell$ & 1 & 2 & 3 & 4 \\
         \hline
         2 & ${\rm a}$ & ${\rm baaba}$ & ${\rm bbaaabbabaabbbababaaabbababaa}$ & \uwave{$\infty$} \\
         3 & ${\rm a}$ & ${\rm aacbaccba}$ & \uwave{$\infty$} & $-$ \\
         4 & ${\rm a}$ & ${\rm aacbaccba}$ & $-$ & $-$ \\
         5 & ${\rm a}$ & ${\rm aacbaccba}$ & $-$ & $-$ \\
        \end{tabular}
    \end{minipage}
    \label{tab:longest_squarefree_instance}
\end{table}

\subsubsection*{\ackname}
We thank Pascal Ochem for pointing out relevant prior work.

\clearpage
\bibliographystyle{unsrt}
\bibliography{literature}

@Article{	  baker96parametrized,
  Author	= {Brenda S. Baker},
  DOI		= {10.1006/jcss.1996.0003},
  Journal	= {J. Comput. Syst. Sci.},
  Number	= {1},
  Pages		= {28--42},
  Title		= {Parameterized Pattern Matching: Algorithms and
		  Applications},
  Volume	= {52},
  Year		= {1996}
}

@Article{	  crochemore95squares,
  Author	= {Maxime Crochemore and Wojciech Rytter},
  DOI		= {10.1007/BF01190846},
  Journal	= {Algorithmica},
  Number	= {5},
  Pages		= {405--425},
  Title		= {Squares, Cubes, and Time-Space Efficient String
		  Searching},
  Volume	= {13},
  Year		= {1995}
}

@Article{	  currie18unary,
  Author	= {James D. Currie and Florin Manea and Dirk Nowotka and
		  Kamellia Reshadi},
  DOI		= {10.1016/j.tcs.2018.05.033},
  Journal	= {Theor. Comput. Sci.},
  Pages		= {72--82},
  Title		= {Unary patterns under permutations},
  Volume	= {743},
  Year		= {2018}
}

@Article{	  entringer74nonrepetitive,
  Author	= {Roger C. Entringer and Douglas E. Jackson and J. A.
		  Schatz},
  DOI		= {10.1016/0097-3165(74)90041-7},
  Journal	= {J. Comb. Theory {A}},
  Number	= {2},
  Pages		= {159--164},
  Title		= {On Nonrepetitive Sequences},
  Volume	= {16},
  Year		= {1974}
}

@Article{	  fraenkel98square,
  Author	= {Aviezri S. Fraenkel and Jamie Simpson},
  DOI		= {10.1006/jcta.1997.2843},
  Journal	= {J. Comb. Theory, Ser. {A}},
  Number	= {1},
  Pages		= {112--120},
  Title		= {How Many Squares Can a String Contain?},
  Volume	= {82},
  Year		= {1998}
}

@Article{	  kim14orderpreserving,
  Author	= {Jinil Kim and Peter Eades and Rudolf Fleischer and
		  Seok{-}Hee Hong and Costas S. Iliopoulos and Kunsoo Park
		  and Simon J. Puglisi and Takeshi Tokuyama},
  DOI		= {10.1016/j.tcs.2013.10.006},
  Journal	= {Theor. Comput. Sci.},
  Pages		= {68--79},
  Title		= {Order-preserving matching},
  Volume	= {525},
  Year		= {2014}
}

@Article{	  kociumaka16maximum,
  Author	= {Tomasz Kociumaka and Jakub Radoszewski and Wojciech Rytter
		  and Tomasz Walen},
  DOI		= {10.1016/j.tcs.2016.08.010},
  Journal	= {Theor. Comput. Sci.},
  Pages		= {84--95},
  Title		= {Maximum number of distinct and nonequivalent nonstandard
		  squares in a word},
  Volume	= {648},
  Year		= {2016}
}

@InProceedings{	  park19cartesian,
  Author	= {Sung Gwan Park and Amihood Amir and Gad M. Landau and
		  Kunsoo Park},
  BookTitle	= {Proc.\ CPM},
  DOI		= {10.4230/LIPIcs.CPM.2019.16},
  Pages		= {16:1--16:14},
  Series	= {LIPIcs},
  Title		= {{Cartesian} Tree Matching and Indexing},
  Volume	= {128},
  Year		= {2019}
}

@Article{	  rampersad05avoiding,
  Author	= {Narad Rampersad and Jeffrey O. Shallit and Ming{-}wei
		  Wang},
  DOI		= {10.1016/J.TCS.2005.01.005},
  Journal	= {Theor. Comput. Sci.},
  Number	= {1},
  Pages		= {19--34},
  Title		= {Avoiding large squares in infinite binary words},
  Volume	= {339},
  Year		= {2005}
}

@article{thue1906,
  author    = {Axel Thue},
  title     = {{\"U}ber unendliche Zeichenreihen},
  journal   = {Norske Vid. Selsk. Skr. Mat. Nat. Kl.},
  volume    = {7},
  pages     = {1--22},
  year      = {1906},
  note      = {Reprinted in \textit{Selected mathematical papers of Axel Thue}, T. Nagell (ed.), Universitetsforlaget, Oslo, 1977, pp.~139--158}
}

@inproceedings{keranen92abelian,
  author       = {Veikko Ker{\"{a}}nen},
  title        = {Abelian Squares are Avoidable on 4 Letters},
  booktitle    = {Automata, Languages and Programming, 19th International Colloquium,
                  ICALP92, Vienna, Austria, July 13-17, 1992, Proceedings},
  series       = {Lecture Notes in Computer Science},
  volume       = {623},
  pages        = {41--52},
  publisher    = {Springer},
  year         = {1992},
  doi          = {10.1007/3-540-55719-9\_62},
  bibsource    = {dblp computer science bibliography, https://dblp.org}
}

@article{cori90abelian,
  author       = {Robert Cori and
                  Maria Rosaria Formisano},
  title        = {Partially Abelian squarefree words},
  journal      = {{RAIRO} Theor. Informatics Appl.},
  volume       = {24},
  pages        = {509--520},
  year         = {1990},
  doi          = {10.1051/ITA/1990240605091},
  bibsource    = {dblp computer science bibliography, https://dblp.org}
}

@article{prodinger79infinite,
  author       = {Helmut Prodinger and Friedrich J. Urbanek},
  title        = {Infinite 0-1-sequences without long adjacent identical blocks},
  journal      = {Discret. Math.},
  volume       = {28},
  number       = {3},
  pages        = {277--289},
  year         = {1979},
  url          = {https://doi.org/10.1016/0012-365X(79)90135-3},
  doi          = {10.1016/0012-365X(79)90135-3},
  bibsource    = {dblp computer science bibliography, https://dblp.org}
}

@article{currie21characterization,
  author       = {James D. Currie and Jesse T. Johnson},
  title        = {Characterization of the lengths of binary circular words containing no squares other than 00, 11, and 0101},
  journal      = {Theor. Comput. Sci.},
  volume       = {850},
  pages        = {30--39},
  year         = {2021},
  url          = {https://doi.org/10.1016/j.tcs.2020.10.029},
  doi          = {10.1016/J.TCS.2020.10.029},
  bibsource    = {dblp computer science bibliography, https://dblp.org}
}

@article{prodinger83nonrepetitive,
  author       = {Helmut Prodinger},
  title        = {Non-repetitive sequences and {Gray} code},
  journal      = {Discret. Math.},
  volume       = {43},
  number       = {1},
  pages        = {113--116},
  year         = {1983},
  url          = {https://doi.org/10.1016/0012-365X(83)90027-4},
  doi          = {10.1016/0012-365X(83)90027-4},
  bibsource    = {dblp computer science bibliography, https://dblp.org}
}

@article{dejean72repetition,
  author       = {Fran{\c{c}}oise Dejean},
  title        = {Sur un Th{\'{e}}or{\`{e}}me de Thue},
  journal      = {J. Comb. Theory {A}},
  volume       = {13},
  number       = {1},
  pages        = {90--99},
  year         = {1972},
  url          = {https://doi.org/10.1016/0097-3165(72)90011-8},
  doi          = {10.1016/0097-3165(72)90011-8},
  bibsource    = {dblp computer science bibliography, https://dblp.org}
}

@article{Badkobeh14finiterepetition,
  author       = {Golnaz Badkobeh and
                  Maxime Crochemore and
                  Micha{\"{e}}l Rao},
  title        = {Finite repetition threshold for large alphabets},
  journal      = {{RAIRO} Theor. Informatics Appl.},
  volume       = {48},
  number       = {4},
  pages        = {419--430},
  year         = {2014},
  url          = {https://doi.org/10.1051/ita/2014017},
  doi          = {10.1051/ITA/2014017},
  bibsource    = {dblp computer science bibliography, https://dblp.org}
}

@inproceedings{hamai24parameterized,
  author       = {Rikuya Hamai and
                  Kazushi Taketsugu and
                  Yuto Nakashima and
                  Shunsuke Inenaga and
                  Hideo Bannai},
  editor       = {Zsuzsanna Lipt{\'{a}}k and
                  Edleno Silva de Moura and
                  Karina Figueroa and
                  Ricardo Baeza{-}Yates},
  title        = {On the Number of Non-equivalent Parameterized Squares in a String},
  booktitle    = {String Processing and Information Retrieval - 31st International Symposium,
                  {SPIRE} 2024, Puerto Vallarta, Mexico, September 23-25, 2024, Proceedings},
  series       = {Lecture Notes in Computer Science},
  volume       = {14899},
  pages        = {174--183},
  publisher    = {Springer},
  year         = {2024},
  url          = {https://doi.org/10.1007/978-3-031-72200-4\_13},
  doi          = {10.1007/978-3-031-72200-4\_13},
  bibsource    = {dblp computer science bibliography, https://dblp.org}
}

@article{manea15cubic,
  author       = {Florin Manea and
                  Mike M{\"{u}}ller and
                  Dirk Nowotka},
  title        = {Cubic patterns with permutations},
  journal      = {J. Comput. Syst. Sci.},
  volume       = {81},
  number       = {7},
  pages        = {1298--1310},
  year         = {2015},
  url          = {https://doi.org/10.1016/j.jcss.2015.04.001},
  doi          = {10.1016/J.JCSS.2015.04.001},
  bibsource    = {dblp computer science bibliography, https://dblp.org}
}

@inproceedings{NgORS19,
  author       = {Tim Ng and
                  Pascal Ochem and
                  Narad Rampersad and
                  Jeffrey O. Shallit},
  editor       = {Robert Mercas and
                  Daniel Reidenbach},
  title        = {New Results on Pseudosquare Avoidance},
  booktitle    = {Combinatorics on Words - 12th International Conference, {WORDS} 2019,
                  Loughborough, UK, September 9-13, 2019, Proceedings},
  series       = {Lecture Notes in Computer Science},
  volume       = {11682},
  pages        = {264--274},
  publisher    = {Springer},
  year         = {2019},
  url          = {https://doi.org/10.1007/978-3-030-28796-2\_21},
  doi          = {10.1007/978-3-030-28796-2\_21},
  bibsource    = {dblp computer science bibliography, https://dblp.org}
}

@article{Walnut,
  author       = {Hamoon Mousavi},
  title        = {Automatic Theorem Proving in Walnut},
  journal      = {CoRR},
  volume       = {abs/1603.06017},
  year         = {2016},
  url          = {http://arxiv.org/abs/1603.06017},
  eprinttype   = {arXiv},
  eprint       = {1603.06017},
  bibsource    = {dblp computer science bibliography, https://dblp.org}
}

\end{document}